\newtheorem{theorem}{Theorem}[section]
\newtheorem{remark}{Remark}[section]
\newtheorem{definition}{Definition}[section]
\newtheorem{proposition}{Proposition}[section]
\newtheorem{lemma}{Lemma}[section]
\newtheorem{corollary}{Corollary}[section]
\newtheorem{problem}{Problem}[section]
\newcommand{\lag}{\left \langle}
\newcommand{\rog}{\right \rangle}
\newcommand{\df}{\mathrel{\mathop:}=}
\begin{document}
\title{The variety generated by all the ordinal sums of perfect MV-chains}
\author{Matteo Bianchi\\{\small Dipartimento di Informatica e Comunicazione}\\{\small Università degli Studi di Milano}\\{\footnotesize \texttt{\href{mailto:matteo.bianchi@unimi.it}{matteo.bianchi@unimi.it}}}}
\date{}
\maketitle
\begin{quote}
\small \it ``To my friend Erika, and to her invaluable talent in finding surprisingly deep connections among poetry, art, philosophy and logic.''
\end{quote}
\begin{abstract}
We present the logic BL$_\text{Chang}$, an axiomatic extension of BL (see \cite{haj}) whose corresponding algebras form the smallest variety containing all the ordinal sums of perfect MV-chains. 
We will analyze this logic and the corresponding algebraic semantics in the propositional and in the first-order case. As we will see, moreover, the variety of BL$_\text{Chang}$-algebras will be strictly connected to the one generated by Chang's MV-algebra (that is, the variety generated by all the perfect MV-algebras): we will also give some new results concerning these last structures and their logic.
\end{abstract}
\section{Introduction}
MV-algebras were introduced in \cite{chang} as the algebraic counterpart of \L ukasie\-wicz (infinite-valued) logic. During the years these structures have been intensively studied (for a hystorical overview, see \cite{cig}): the book \cite{mun} is a reference monograph on this topic.

Perfect MV-algebras were firstly studied in \cite{dnl2} as a refinement of the notion of local MV-algebras: this analysis was expanded in \cite{dnl1}, where it was also shown that the class of perfect MV-algebras $Perf(MV)$ does not form a variety, and the variety generated by $Perf(MV)$ is also generated by Chang's MV-algebra (see \Cref{sec:mvcha} for the definition). Further studies, about this variety and the associated logic have been done in \cite{bdg, bdg1}. 

On the other side, Basic Logic BL and its correspondent variety, BL-algebras, were introduced in \cite{haj}: \L ukasiewicz logic results to be one of the axiomatic extensions of BL and MV-algebras can also be defined as a subclass of BL-algebras. Moreover, the connection between MV-algebras and BL-algebras is even stronger: in fact, as shown in \cite{am}, every ordinal sum of MV-chains is a BL-chain.

For these reasons one can ask if there is a variety of BL-algebras whose chains are (isomorphic to) ordinal sums of perfect MV-chains: even if the answer to this question is negative, we will present the smallest variety (whose correspondent logic is called BL$_\text{Chang}$) containing this class of BL-chains. 

As we have anticipated in the abstract, there is a connection between the variety of BL$_\text{Chang}$-algebras and the one generated by Chang's MV-algebra. In fact the first-one is axiomatized (over the variety of BL-algebras) with an equation that, over MV-algebras, is equivalent to the one that axiomatize the variety generated by Chang MV-algebras: however, the two equations are \emph{not} equivalent, over BL.
\paragraph*{}
The paper is structured as follows: in \Cref{sec:prelim} we introduce the necessary logical and algebraic background: moreover some basic results about perfect MV-algebras and other structures will be listed. In \Cref{sec:blcha} we introduce the main theme of the article: the variety of BL$_\text{Chang}$ and the associated logic. The analysis will be done in the propositional case: completeness results, algebraic and logical properties and also some results about the variety generated by Chang's MV-algebra.  
We conclude with \Cref{sec:first}, where we will analyze the first-order versions of BL$_\text{Chang}$ and \L$_\text{Chang}$: for the first-one the completeness results will be much more negative. 
\paragraph{}
To conclude, we list the main results.
\begin{itemize}
\item BL$_\text{Chang}$ enjoys the finite strong completeness (but not the strong one) w.r.t. $\omega\mathcal{V}$, where $\omega\mathcal{V}$ represents the ordinal sum of $\omega$ copies of the disconnected rotation of the standard cancellative hoop. 
\item \L$_\text{Chang}$ (the logic associated to the variety generated by Chang's MV-algebra) enjoys the finite strong completeness (but not the strong one) w.r.t. $\mathcal{V}$, $\mathcal{V}$ being the disconnected rotation of the standard cancellative hoop.
\item There are two BL-chains $\mathcal{A}, \mathcal{B}$ that are strongly complete w.r.t., respectively \L$_\text{Chang}$ and BL$_\text{Chang}$.
\item Every \L$_\text{Chang}$-chain that is strongly complete w.r.t. \L$_\text{Chang}$ is also stron\-gly complete w.r.t \L$_\text{Chang}\forall$.
\item There is no BL$_\text{Chang}$-chain to be complete w.r.t. BL$_\text{Chang}\forall$.   
\end{itemize} 
\section{Preliminaries}\label{sec:prelim}
\subsection{Basic concepts}
Basic Logic BL was introduced by P. H\'{a}jek in \cite{haj}. It is based over the connectives $\{\&,\to,\bot\}$ and a denumerable set of variables $VAR$. The formulas are defined inductively, as usual (see \cite{haj} for details).

Other derived connectives are the following.

{\em negation}: $\neg \varphi \df \varphi\to\bot$;
{\em verum} or {\em top}: $\top \df\neg\bot$;
{\em meet}: $\varphi\land\psi \df \varphi\&(\varphi\to\psi)$;
{\em join}: $\varphi\vee\psi \df ((\varphi\to\psi)\to\psi)\land((\psi\to\varphi)\to\varphi)$.
\paragraph*{}
\noindent BL is axiomatized as follows.
\begin{align}
\tag{A1}&(\varphi \rightarrow \psi)\rightarrow ((\psi\rightarrow \chi)\rightarrow(\varphi\rightarrow \chi))\\
\tag{A2}&(\varphi\&\psi)\rightarrow \varphi\\
\tag{A3}&(\varphi\&\psi)\rightarrow(\psi\&\varphi)\\
\tag{A4}&(\varphi\&(\varphi\to\psi))\to(\psi\& (\psi\to\varphi))\\
\tag{A5a}&(\varphi\rightarrow(\psi\rightarrow\chi))\rightarrow((\varphi\&\psi)\rightarrow \chi)\\
\tag{A5b}&((\varphi\&\psi)\rightarrow \chi)\rightarrow(\varphi\rightarrow(\psi\rightarrow\chi))\\
\tag{A6}&((\varphi\rightarrow\psi)\rightarrow\chi)\rightarrow(((\psi\rightarrow\varphi)\rightarrow\chi)\rightarrow\chi)\\
\tag{A7}&\bot\rightarrow\varphi
\end{align}
{\em Modus ponens} is the only inference rule:
\begin{equation}
\tag{MP}\frac{\varphi\quad\varphi\to\psi}{\psi}.
\end{equation}
Among the extensions of BL (logics obtained from it by adding other axioms) there is the well known \L ukasiewicz (infinitely-valued) logic \L, that is, BL plus
\begin{equation}
\tag*{(INV)}\neg\neg\varphi\to\varphi.
\end{equation}
On \L ukasiewicz logic we can also define a strong disjunction connective (in the following sections, we will introduce a strong disjunction connective, for BL, that will be proved to be equivalent to the following, over \L)
\begin{equation*}
\varphi\curlyvee\psi\df\neg(\neg\varphi\&\neg\psi).
\end{equation*}
The notations $\varphi^n$ and $n\varphi$ will indicate $\underbrace{\varphi\&\dots\&\varphi}_{n\text{ times}}$ and $\underbrace{\varphi\curlyvee\dots\curlyvee\varphi}_{n\text{ times}}$.

Given an axiomatic extension L of BL, a formula $\varphi$ and a theory $T$ (a set of formulas), the notation $T\vdash_L\varphi$ indicates that there is a proof of $\varphi$ from the axioms of L and the ones of $T$. The notion of proof is defined like in classical case (see \cite{haj}).
\paragraph*{}
We now move to the semantics: for all the unexplained notions of universal algebra, we refer to \cite{bs, gra}.
\begin{definition}
A BL-algebra is an algebraic structure of the form $\mathcal{A}=\lag A,*,\Rightarrow,\sqcap,\sqcup,0,1\rog$ such that
\begin{itemize}
\item $\lag A,\sqcap,\sqcup,0,1 \rog$ is a bounded lattice, where $0$ is the bottom and $1$ the top element.
\item $\lag A,*,1\rog$ is a commutative monoid.
\item $\lag *,\Rightarrow\rog$ forms a residuated pair,\index{residuated pair} i.e.
\begin{equation}
\tag*{(res)}z*x\leq y\quad\text{iff}\quad z\leq x\Rightarrow y, \label{eq:res}
\end{equation}
it can be shown that the only operation that satisfies \ref{eq:res} is $x\Rightarrow y=\max\{z:\ z*x\leq y\}$.
\item $\mathcal{A}$ satisfies the following equations
\begin{align}
\tag*{(pl)}&(x\Rightarrow y)\sqcup(y\Rightarrow x)=1\\
\tag*{(div)}&x\sqcap y=x*(x\Rightarrow y).
\end{align}
\end{itemize}
Two important types of BL-algebras are the followings.
\begin{itemize}
\item A BL-chain is a totally ordered BL-algebra.
\item A standard BL-algebra is a BL-algebra whose support is $[0,1]$.
\end{itemize}
\end{definition}
Notation: in the following, with $\sim x$ we will indicate $x\Rightarrow 0$.
\begin{definition}
An MV-algebra is a BL-algebra satisfying
\begin{equation}
\tag*{(inv)}x=\sim\sim x.
\end{equation}
A well known example of MV-algebra is the standard MV-algebra $[0,1]_\text{\L}=\\\lag [0,1], *,\Rightarrow, \min, \max, 0, 1\rog$, where $x*y=\max(0,x+y-1)$ and $x\Rightarrow y=\min(1, 1-x+y)$.
\end{definition}
In every MV-algebra we define the algebraic equivalent of $\curlyvee$, that is
\begin{equation*}
x\oplus y\df\sim(\sim x * \sim y).
\end{equation*}
The notations (where $x$ is an element of some BL-algebra) $x^n$ and $nx$ will indicate $\underbrace{x*\dots *x}_{n\text{ times}}$ and $\underbrace{x\oplus\dots\oplus x}_{n\text{ times}}$.

Given a BL-algebra $\mathcal{A}$, the notion of $\mathcal{A}$-evaluation is defined in a truth-functional way (starting from a map $v:\, VAR\to A$, and extending it to formulas), for details see \cite{haj}.

Consider a BL-algebra $\mathcal{A}$, a theory $T$ and a formula $\varphi$. With $\mathcal{A}\models \varphi$ ($\mathcal{A}$ is a model of $\varphi$) we indicate that $v(\varphi)=1$, for every $\mathcal{A}$-evaluation $v$; $\mathcal{A}\models T$ denotes that $\mathcal{A}\models \psi$, for every $\psi\in T$. Finally, the notation $T\models_\mathcal{A}\varphi$ means that if $\mathcal{A}\models T$, then $\mathcal{A}\models \varphi$.

A BL-algebra $\mathcal{A}$ is called L-algebra, where L is an axiomatic extension of BL, whenever $\mathcal{A}$ is a model for all the axioms of L.
\begin{definition}
Let L be an axiomatic extension of BL and $K$ a class of L-algebras. We say that L is strongly
complete (respectively: finitely strongly complete, complete) with respect to $K$
if for every set $T$ of formulas (respectively, for every finite set $T$ of formulas, for
$T=\emptyset$) and for every formula $\varphi$ we have
\begin{equation*}
T\vdash_L\varphi\quad \text{iff}\quad T\models_K\varphi.
\end{equation*}
\end{definition}
\subsection{Perfect MV-algebras, hoops and disconnected rotations}\label{sec:mvcha}
We recall that Chang's \emph{MV}-algebra (\cite{chang}) is a BL-algebra of the form
\begin{equation*}
C=\lag \{a_n:\ n\in\mathbb{N}\}\cup\{b_n:\ n\in\mathbb{N}\}, *, \Rightarrow, \sqcap,\sqcup, b_0, a_0\rog.
\end{equation*}
Where for each $n,m\in \mathbb{N}$, it holds that $b_n<a_m$, and, if $n<m$, then $a_m<a_n,\ b_n<b_m$; moreover $a_0=1,\ b_0=0$ (the top and the bottom element).
\smallskip

The operation $*$ is defined as follows, for each $n,m\in \mathbb{N}$:
\begin{equation*}
b_n*b_m=b_0,\ b_n*a_m=b_{\max(0,n-m)},\ a_n*a_m=a_{n+m}.
\end{equation*}
\begin{definition}[\cite{dnl2}]
Let $\mathcal{A}$ be an MV-algebra and let $x\in\mathcal{A}$: with $ord(x)$ we mean the least (positive) natural $n$ such that $x^n=0$. If there is no such $n$, then we set $ord(x)=\infty$.
\begin{itemize}
\item An MV-algebra is called \emph{local}\footnote{Usually, the local MV-algebras are defined as MV-algebras having a unique (proper) maximal ideal. In \cite{dnl2}, however, it is shown that the two definitions are equivalent. We have preferred the other definition since it shows in a more transparent way that perfect MV-algebras are particular cases of local MV-algebras.} if for every element $x$ it holds that \\$ord(x)<\infty$ or $ord(\sim x)<\infty$.
\item An MV-algebra is called \emph{perfect} if for every element $x$ it holds that $ord(x)<\infty$ iff $ord(\sim x)=\infty$.
\end{itemize}
\end{definition}
An easy consequence of this definition is that every perfect MV-algebra cannot have a negation fixpoint.

With $Perfect(MV)$ and $Local(MV)$ we will indicate the class of perfect and local MV-algebras. Moreover, given a BL-algebra $\mathcal{A}$, with $\mathbf{V}(\mathcal{A})$ we will denote the variety generated by $\mathcal{A}$.
\begin{theorem}[\cite{dnl2}]
Every MV-chain is local.
\end{theorem}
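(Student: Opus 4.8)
The plan is to reduce the statement to a single instance of the residuation condition \ref{eq:res} combined with the defining feature of a chain, namely that any two elements are comparable. The whole argument hinges on one observation, so I would isolate it first and then apply it twice.

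First I would prove the following fact, valid in \emph{any} MV-algebra $\mathcal{A}$ and for any $y\in A$: one has $y\leq\sim y$ if and only if $y^2=0$. This is immediate from residuation: by \ref{eq:res} the inequality $y*y\leq 0$ holds precisely when $y\leq(y\Rightarrow 0)=\sim y$, and since $0$ is the bottom element, $y*y\leq 0$ is the same as $y^2=0$. Thus $y\leq\sim y$ already forces $ord(y)\leq 2<\infty$, and notably this step uses nothing about $\mathcal{A}$ being totally ordered.

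Next I would bring in the hypothesis that $\mathcal{A}$ is a chain. Fix an arbitrary $x\in A$. By comparability we are in one of two cases: either $x\leq\sim x$ or $\sim x\leq x$. In the first case the observation gives $x^2=0$, hence $ord(x)<\infty$. In the second case I would use the involution law $\sim\sim x=x$ to rewrite $\sim x\leq x$ as $\sim x\leq\sim(\sim x)$, and then apply the observation with $y=\sim x$ to obtain $(\sim x)^2=0$, hence $ord(\sim x)<\infty$. In both cases the local condition holds at $x$, and since $x$ was arbitrary, $\mathcal{A}$ is local.

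I do not expect a genuine obstacle here: the entire content is the residuation identity above, and the role of the chain hypothesis is simply to guarantee that $x$ and $\sim x$ are comparable. (For a general MV-algebra they may be incomparable, which is exactly the reason why not every MV-algebra is local.) It is worth remarking that the proof in fact yields the sharper bound $\min\{ord(x),ord(\sim x)\}\leq 2$ for every element $x$ of an MV-chain.
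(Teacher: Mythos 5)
Your proof is correct. The paper offers no proof of this theorem---it is quoted directly from \cite{dnl2}---so there is nothing internal to compare against; your argument is the standard one and is complete as written: the residuation condition (res) gives $y^2=0$ iff $y\leq\,\sim y$ in any MV-algebra, and comparability of $x$ and $\sim x$ in a chain (together with $\sim\sim x=x$) then forces $x^2=0$ or $(\sim x)^2=0$, which indeed yields the sharper bound $\min\{ord(x),ord(\sim x)\}\leq 2$ that you note.
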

Clearly there are local MV-algebras that are not perfect: $[0,1]_\text{\L}$ is an example.

Now, in \cite{dnl1} it is shown that
\begin{theorem}\label{teo:perfcha}
\begin{itemize}
\item[]
\item $\mathbf{V}(C)=\mathbf{V}(Perfect(MV))$,
\item $Perfect(MV)=Local(MV)\cap\mathbf{V}(C)$.
\end{itemize}
\end{theorem}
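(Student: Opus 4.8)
The plan is to prove the two assertions in turn, reducing the MV-algebraic statements to facts about lattice-ordered abelian groups through the Mundici functor $\Gamma$ and the Di Nola--Lettieri representation of perfect MV-algebras.

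First I would dispose of the easy half of the first item. Computing in $C$ one checks that $\sim a_n=b_n$, $\sim b_n=a_n$, that $ord(a_n)=\infty$ (since $a_n^k=a_{nk}\neq b_0$) and that $ord(b_n)\leq 2$ (since $b_n*b_n=b_0$); hence for every element exactly one of $x,\sim x$ has finite order, so $C$ is itself perfect and $\mathbf{V}(C)\subseteq\mathbf{V}(Perfect(MV))$. For the reverse inclusion, the crux of the theorem, I would invoke the representation (from \cite{dnl2}) that every perfect MV-algebra is isomorphic to $D(G)\df\Gamma(\mathbb{Z}\times_{\mathrm{lex}}G,(1,0))$ for a suitable abelian $\ell$-group $G$, noting that $C\cong D(\mathbb{Z})$. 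The goal becomes $D(G)\in\mathbf{V}(C)$ for every abelian $\ell$-group $G$. Here I would use Weinberg's theorem that $\mathbb{Z}$ generates the variety of all abelian $\ell$-groups, so that $G=\varphi(H)$ for some $\ell$-subgroup $H\leq\mathbb{Z}^I$ and some surjective $\ell$-homomorphism $\varphi$. Since $D$ manifestly preserves $\ell$-subgroups and surjections (being $\Gamma$ applied to a lexicographic product), it suffices to treat the power $\mathbb{Z}^I$: the key observation is the $\ell$-group embedding $\mathbb{Z}\times_{\mathrm{lex}}\mathbb{Z}^I\hookrightarrow(\mathbb{Z}\times_{\mathrm{lex}}\mathbb{Z})^I$, $(n,(m_i)_i)\mapsto((n,m_i))_i$, which one verifies is both order- and lattice-preserving; applying $\Gamma$ and matching units gives $D(\mathbb{Z}^I)\hookrightarrow C^I$. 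Chaining these, $D(G)\in\mathbf{H}(D(H))\subseteq\mathbf{H}\mathbf{S}(D(\mathbb{Z}^I))\subseteq\mathbf{H}\mathbf{S}\mathbf{P}(C)=\mathbf{V}(C)$, which yields $Perfect(MV)\subseteq\mathbf{V}(C)$ and hence equality.

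For the second item, the inclusion $Perfect(MV)\subseteq Local(MV)\cap\mathbf{V}(C)$ is immediate: the perfect condition forces, for each $x$, at least one of $ord(x),ord(\sim x)$ to be finite, so every perfect algebra is local, and membership in $\mathbf{V}(C)$ is the first item. For the converse, let $\mathcal{A}$ be local and in $\mathbf{V}(C)$. Being local, $\mathcal{A}$ has a unique maximal ideal $\mathrm{Rad}(\mathcal{A})$, and $\mathcal{A}/\mathrm{Rad}(\mathcal{A})$ is a simple MV-algebra, hence embeds into $[0,1]_{\text{\L}}$, while $\mathcal{A}/\mathrm{Rad}(\mathcal{A})\in\mathbf{V}(C)$ since varieties are closed under quotients. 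I would then show that the only simple MV-algebra in $\mathbf{V}(C)$ is the two-element Boolean algebra: $\mathbf{V}(C)$ satisfies the equation $2(x^2)=(2x)^2$ (one checks it directly on the generators $a_n,b_n$), whereas any simple algebra properly containing $\{0,1\}$ contains, as a subalgebra of $[0,1]_{\text{\L}}$, an element whose value lies in $(1/4,1/2]$ (reachable as an $\oplus$-multiple of $t$ or $\sim t$ for some $t\in(0,1)$), and such an element witnesses the failure of that equation. Thus $\mathcal{A}/\mathrm{Rad}(\mathcal{A})\cong\{0,1\}$, which is exactly the condition characterizing perfect MV-algebras among the local ones; therefore $\mathcal{A}\in Perfect(MV)$.

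The main obstacle is the reverse inclusion of the first item, and within it the transfer from $\ell$-groups to MV-algebras. The delicate point is that $Perfect(MV)$ is not closed under products (indeed not a variety), so one cannot simply transport products through the representation; the argument instead routes the single power $\mathbb{Z}^I$ through the honest MV-embedding $D(\mathbb{Z}^I)\hookrightarrow C^I$ and relies on Weinberg's generation theorem to reduce every $G$ to $\mathbf{H}\mathbf{S}$ of such a power. Verifying that the displayed map is a lattice embedding, and pinning down (and checking on $C$) the equation that isolates $\mathbf{V}(C)$ among the simple MV-algebras, are the two computations I expect to demand the most care.
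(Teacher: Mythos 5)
The paper does not prove this theorem at all: it is quoted verbatim from the Di Nola--Lettieri literature (the sentence preceding it reads ``in \cite{dnl1} it is shown that''), so there is no internal proof to compare against. Your reconstruction is essentially the argument from that literature, and it is sound. The first item correctly routes through $D(G)=\Gamma(\mathbb{Z}\times_{\mathrm{lex}}G,(1,0))$, Weinberg's generation theorem, and the coordinatewise map $\mathbb{Z}\times_{\mathrm{lex}}\mathbb{Z}^I\to(\mathbb{Z}\times_{\mathrm{lex}}\mathbb{Z})^I$; the one point you rightly flag as delicate does work out, since the join in a lexicographic product $\mathbb{Z}\times_{\mathrm{lex}}H$ is $(n,h)$ when $n>n'$ and $(n,h\vee h')$ when $n=n'$, which is exactly what the componentwise join in the codomain computes, so the map is an honest $\ell$-embedding and not merely order-reflecting. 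The second item is also correct; the computation showing that $2(x^2)=(2x)^2$ fails at any $x\in(1/4,1/2]$ of $[0,1]_{\text{\L}}$, together with the reachability of such an $x$ from any $t\in(0,1)$, does isolate $\mathbf{2}$ as the only simple algebra of $\mathbf{V}(C)$ (consistent with the paper's later remark to the same effect). The only step you leave implicit that genuinely needs a supporting fact is the last one: to pass from $\mathcal{A}/\mathrm{Rad}(\mathcal{A})\cong\mathbf{2}$ back to the order-theoretic definition of perfectness used here, you need that $\mathrm{Rad}(\mathcal{A})$ consists of infinitesimals ($nx\le\ \sim x$ for all $n$), whence $x\in\mathrm{Rad}(\mathcal{A})$ gives $x^2=0$ and $ord(\sim x)=\infty$; that is a standard lemma but should be cited or proved rather than folded into ``exactly the condition characterizing perfect MV-algebras among the local ones.''
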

It follows that the class of chains in $\mathbf{V}(C)$ coincides with the one of perfect MV-chains. Moreover
\begin{theorem}[\cite{dnl1}]\label{teo:chaeq}
An MV-algebra is in the variety $\mathbf{V}(C)$ iff it satisfies the equation $(2x)^2=2(x^2)$.
\end{theorem}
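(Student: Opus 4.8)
The plan is to prove the two inclusions $\mathbf{V}(C)\subseteq W$ and $W\subseteq\mathbf{V}(C)$ separately, where $W$ denotes the variety of MV-algebras satisfying $(2x)^2=2(x^2)$. The first inclusion (soundness of the equation) is routine; the second (every model of the equation lies in $\mathbf{V}(C)$) carries the real content, and I would reduce it, via subdirect representation, to a statement about chains, which by \Cref{teo:perfcha} amounts to showing that an MV-chain validating the equation is perfect.

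For soundness I would simply evaluate $(2x)^2$ and $2(x^2)$ on the two kinds of generators of $C$. Using $b_n*b_m=b_0$, $a_n*a_m=a_{n+m}$ together with $\sim a_n=b_n$ and $\sim b_n=a_n$, one obtains $2b_n=b_{2n}$, $(b_n)^2=b_0$, $2a_n=a_0=1$ and $(a_n)^2=a_{2n}$. Hence for $x=b_n$ both sides equal $0$, while for $x=a_n$ both sides equal $1$; so $C\models(2x)^2=2(x^2)$. Since identities are preserved by $\mathbf{H},\mathbf{S},\mathbf{P}$, the equation holds throughout $\mathbf{V}(C)$, giving $\mathbf{V}(C)\subseteq W$.

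For the converse, let $\mathcal{A}\in W$. Every MV-algebra is a subdirect product of MV-chains, and each factor is a homomorphic image of $\mathcal{A}$, hence again a member of $W$; since $\mathbf{V}(C)$ is closed under $\mathbf{S}$ and $\mathbf{P}$, it suffices to treat a chain $\mathcal{A}\in W$ and show $\mathcal{A}\in\mathbf{V}(C)$. By \Cref{teo:perfcha} (and the remark that the chains of $\mathbf{V}(C)$ are exactly the perfect MV-chains) this reduces to proving that $\mathcal{A}$ is perfect. Now $\mathcal{A}$ is local, since every MV-chain is, so it has a unique maximal ideal $Rad(\mathcal{A})$, and the quotient $S\df\mathcal{A}/Rad(\mathcal{A})$ is a simple MV-chain, hence (up to isomorphism) a subalgebra of $[0,1]_\text{\L}$. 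Suppose, for contradiction, that $\mathcal{A}$ is not perfect: then there is $c$ with $ord(c)<\infty$ and $ord(\sim c)<\infty$. Writing $\bar c$ for the image of $c$ in $S$, I claim $\bar c\neq 0$ and $\bar c\neq 1$: indeed $\bar c=0$ would give $\overline{\sim c}=\sim\bar c=1$ in $S$, so $(\sim c)^n$ has image $1\neq 0$ and thus $(\sim c)^n\neq 0$ for all $n$, forcing $ord(\sim c)=\infty$; the case $\bar c=1$ is symmetric. Hence $S$ contains an interior point $\bar c\in(0,1)$.

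The crux is then the computational lemma: \emph{no subalgebra of $[0,1]_\text{\L}$ containing a point of $(0,1)$ can satisfy $(2x)^2=2(x^2)$}. A direct calculation in $[0,1]_\text{\L}$ shows that the equation fails precisely on $(1/4,3/4)$: for $t\le 1/2$ one has $(2t)^2=\max(0,4t-1)$ and $2(t^2)=0$, so equality forces $t\le 1/4$, and the range $t\ge 1/2$ follows by the self-duality of the equation under $t\mapsto\sim t$. Given any interior point $s$ of the subalgebra (replacing $s$ by $\sim s$ if necessary, we may assume $s\le 1/2$), closure under $\oplus$ produces the multiples $s,2s,3s,\dots$; since consecutive ones differ by at most $s\le 1/2$, the least $k$ with $ks>1/4$ satisfies $ks\le 1/4+s<3/4$, so some multiple lands inside $(1/4,3/4)$, where the equation fails. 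Applying this to $S$ with $s=\bar c$ contradicts the fact that $S$, being a quotient of $\mathcal{A}\in W$, must itself satisfy the equation. Therefore $\mathcal{A}$ is perfect, and the argument is complete. I expect the main obstacle to be exactly this last lemma, together with the verification that the non-perfect witness $c$ descends to a genuine interior point of the simple quotient: everything hinges on pinning down the failure region of the equation in $[0,1]_\text{\L}$ and on using closure under $\oplus$ to reach it, whereas the reduction to chains and the soundness direction are essentially bookkeeping.
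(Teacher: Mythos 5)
The paper does not prove this statement at all: it is imported as a black box from \cite{dnl1}, so there is no internal proof to compare yours against, and your argument has to be judged on its own. On that basis it is correct and essentially self-contained, modulo standard MV-algebra facts that the paper itself relies on elsewhere (subdirect representation over chains, simplicity of the quotient of a local algebra by its unique maximal ideal, and the embedding of simple MV-algebras into $[0,1]_\text{\L}$, which the paper cites as \cite[Theorem 3.5.1]{mun}). The soundness computation on $C$ checks out ($2b_n=b_{2n}$, $(b_n)^2=b_0$, $2a_n=a_0$, $(a_n)^2=a_{2n}$), the reduction to chains is routine, and passing from ``chain in $\mathbf{V}(C)$'' to ``perfect'' is exactly what \Cref{teo:perfcha} and the surrounding remark license. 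The genuine content is your two-part endgame: the non-perfect witness $c$ descends to an interior point of the simple quotient $S\hookrightarrow[0,1]_\text{\L}$, and no subalgebra of $[0,1]_\text{\L}$ meeting $(0,1)$ can satisfy $(2x)^2=2(x^2)$ because the equation fails exactly on $(1/4,3/4)$ and truncated multiples of any $s\in(0,1/2]$ must land in that window. Both steps are sound; the self-duality of the equation under $x\mapsto\sim x$ that you use to get the $t\ge 1/2$ half of the failure region is a correct and clean shortcut. One cosmetic slip: when $s=1/2$ the written chain $ks\le 1/4+s<3/4$ is not strict, but then $k=1$ and $ks=1/2$ lies in the window anyway, so nothing breaks. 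In short, this is a legitimate proof of a theorem the paper only cites, and the decisive lemma --- pinning down the failure set of the equation in $[0,1]_\text{\L}$ --- is exactly the right thing to isolate.
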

As shown in \cite{bdg}, the logic correspondent to this variety is axiomatized as {\L} plus $(2\varphi)^2\leftrightarrow 2(\varphi^2)$: we will call it {\L}$_\text{Chang}$.
\paragraph*{}
We now recall some results about hoops
\begin{definition}[\cite{f, bf}]
A \emph{hoop} is a structure $\mathcal{A}=\langle A, *
,\Rightarrow ,1\rangle $ such that $\langle A, * ,1\rangle $ is a
commutative monoid, and $\Rightarrow $ is a binary operation such that
\[
x\Rightarrow x=1,\hspace{0.5cm}x\Rightarrow (y\Rightarrow z)=(x *
y)\Rightarrow z\hspace{0.5cm}\mathrm{and}\hspace{0.5cm}x * (x\Rightarrow
y)=y * (y\Rightarrow x).
\]
\end{definition}

In any hoop, the operation $\Rightarrow $ induces a partial order $\le $
defined by $x\le y$ iff $x\Rightarrow y=1$. Moreover, hoops are precisely
the partially ordered commutative integral residuated monoids (pocrims) in
which the meet operation $\sqcap$ is definable by $x\sqcap y=x *
(x\Rightarrow y)$. Finally, hoops satisfy the following divisibility
condition:
\begin{equation}
\tag*{(div)}\text{If } x \le y, \text{ then there is an element } z\text{ such
that }z * y=x.
\end{equation}
We recall a useful result.
\begin{definition}
Let $\mathcal{A}$ and $\mathcal{B}$ be two algebras of the same language. Then we say that
\begin{itemize}
\item $\mathcal{A}$ is a partial subalgebra of $\mathcal{B}$ if $A\subseteq B$ and the operations of $\mathcal{A}$ are the ones of $\mathcal{A}$ restricted to $A$. Note that $A$ could not be closed under these operations (in this case these last ones will be undefined over some elements of $A$): in this sense $\mathcal{A}$ is a partial subalgebra.
\item $\mathcal{A}$ is partially embeddable into $\mathcal{B}$ when every finite partial subalgebra of $\mathcal{A}$ is embeddable into $\mathcal{B}$. Generalizing
this notion to classes of algebras, we say that a class $K$ of algebras is partially embeddable into a
class $M$ if every finite partial subalgebra of a member of $K$ is embeddable into a member of $M$.
\end{itemize}
\end{definition}
\begin{definition}\label{def:bound}
A \emph{bounded} hoop is a hoop with a minimum element; conversely, an \emph{unbounded} hoop is a hoop without minimum. 

Let $\mathcal{A}$ be a bounded hoop with minimum $a$: with $\mathcal{A}^+$ we mean the (partial) subalgebra of $\mathcal{A}$ defined over the universe $A^+=\{x\in A:\, x>x\Rightarrow a\}$.

A hoop is Wajsberg iff it satisfies the equation $(x\Rightarrow y)\Rightarrow y=(y\Rightarrow x)\Rightarrow x$.

A hoop is cancellative iff it satisfies the equation $x=y\Rightarrow(x*y)$.
\end{definition}
\begin{proposition}[\cite{f, bf, afm}]\label{prop:canc}
Every cancellative hoop is Wajsberg. Totally ordered cancellative hoops coincide with unbounded totally ordered Wajsberg hoops, whereas bounded Wajsberg hoops coincide with (the $0$-free reducts of) MV-algebras.
\end{proposition}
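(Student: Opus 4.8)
The plan is to treat the three assertions separately, using throughout the facts recalled before the statement: hoops are residuated (so $u*x\le y$ iff $u\le x\Rightarrow y$), $1$ is the top element, the meet is $x\sqcap y=x*(x\Rightarrow y)=y*(y\Rightarrow x)$, and integrality holds ($x*z\le x$ whenever $z\le 1$). First I would prove that every cancellative hoop is Wajsberg. The cancellation law $x=y\Rightarrow(x*y)$ makes $*$ injective, since from $a*c=b*c$ one gets $a=c\Rightarrow(a*c)=c\Rightarrow(b*c)=b$. Now fix $x,y$ and put $p=(x\Rightarrow y)\Rightarrow y$, $q=(y\Rightarrow x)\Rightarrow x$, $m=x\sqcap y$. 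Since $x*y\le y$ gives $y\le x\Rightarrow y$ by residuation, the meet formula yields $(x\Rightarrow y)*p=(x\Rightarrow y)\sqcap y=y$; multiplying by $x$ and using $x*(x\Rightarrow y)=m$ gives $m*p=x*y$. The symmetric computation gives $m*q=x*y$, so $m*p=m*q$, and cancellation forces $p=q$, which is precisely the Wajsberg identity.

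For the second assertion I would prove both inclusions. If a totally ordered cancellative hoop is nontrivial then it is unbounded: a minimum $a$ would satisfy $a*y=a$ for every $y$ (by integrality and minimality), whence $a*a=a*1$ and cancellation forces $a=1$, i.e. triviality; and it is Wajsberg by the first part. Conversely I must show that an unbounded totally ordered Wajsberg hoop is cancellative. In a chain the Wajsberg identity specialises, for $x\le y$, to the regularity $(y\Rightarrow x)\Rightarrow x=y$ (using $x\Rightarrow y=1$ and $1\Rightarrow y=y$). One always has $x\le y\Rightarrow(x*y)$, so if cancellation failed there would exist $x<w$ with $w*y=x*y$; the goal is to show that such a collapse can occur only at a least element, contradicting unboundedness. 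This is the step I expect to be the main obstacle: converting a local failure of cancellation into the existence of a bottom genuinely uses the linear order together with the regularity above, and is obtained most cleanly from the structure theory of totally ordered Wajsberg hoops of \cite{bf} (equivalently, from their representation as negative cones of totally ordered abelian groups, in which $(x\Rightarrow y)\Rightarrow y$ computes $x\sqcup y$ and cancellativity is transparent).

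Finally, for bounded Wajsberg hoops versus MV-algebras, one direction is routine: the $0$-free reduct of an MV-algebra is a bounded hoop, and it is Wajsberg because $(x\Rightarrow y)\Rightarrow y=x\sqcup y$ holds in every MV-algebra. For the converse let $\mathcal{A}$ be a bounded Wajsberg hoop with minimum $0$ and set $\sim x=x\Rightarrow 0$. The crucial point is the involution, and it is immediate from Wajsberg with $y=0$: since $0\le x$ gives $0\Rightarrow x=1$ and $1\Rightarrow x=x$, we obtain $\sim\sim x=(x\Rightarrow 0)\Rightarrow 0=(0\Rightarrow x)\Rightarrow x=x$. It then remains to read off the MV-structure: the join is $x\sqcup y=(x\Rightarrow y)\Rightarrow y$ (symmetric by Wajsberg), residuation and divisibility are already present in every hoop, and together with the involution these give the defining equations of MV-algebras (equivalently, $\langle A,\Rightarrow,\sim,1\rangle$ is a Wajsberg algebra, term-equivalent to an MV-algebra). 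The one ingredient that again leans on the structure theory is prelinearity $(x\Rightarrow y)\sqcup(y\Rightarrow x)=1$, which follows once Wajsberg hoops are known to be subdirect products of Wajsberg chains, where it is trivial; after that reduction the remaining MV-identities are verified on chains.
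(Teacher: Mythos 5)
The paper gives no proof of this proposition at all: it is imported verbatim from \cite{f, bf, afm}, so there is no in-text argument to compare yours against, and the question is only whether your proposal stands on its own. Where you supply details, they are correct and pleasantly elementary. Your derivation of the Wajsberg identity from cancellativity is sound: injectivity of $*$ follows from $a=c\Rightarrow(a*c)$, and the two computations $(x\sqcap y)*((x\Rightarrow y)\Rightarrow y)=x*y=(x\sqcap y)*((y\Rightarrow x)\Rightarrow x)$ (using $y\le x\Rightarrow y$ and the meet formula $u\sqcap v=u*(u\Rightarrow v)=v*(v\Rightarrow u)$) then cancel to give the identity. The argument that a nontrivial cancellative hoop has no minimum, and the involution $\sim\sim x=(x\Rightarrow 0)\Rightarrow 0=(0\Rightarrow x)\Rightarrow x=x$ in a bounded Wajsberg hoop, are likewise correct. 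The caveat is that the two steps you yourself flag are exactly the substantive kernel of the cited result, and you do not prove them: (i) that an unbounded totally ordered Wajsberg hoop is cancellative, and (ii) the subdirect decomposition of Wajsberg hoops into chains, which you need for prelinearity (and for knowing that $(x\Rightarrow y)\Rightarrow y$ is really the join) in the third assertion. For (ii) the appeal is legitimate and cheap, since the paper itself separately quotes \cite[Corollary 3.11]{f} for precisely that fact. For (i), however, your text reduces the claim to the structure theory of \cite{bf} (equivalently, to the representation of totally ordered Wajsberg hoops inside lattice-ordered abelian groups or as $0$-free subreducts of MV-chains, where a failure of cancellation forces some product to equal the bottom, which then belongs to the subreduct); that representation is essentially the proposition being proved, so this portion of your proposal is a re-citation rather than a derivation. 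In short: your write-up is strictly more informative than the paper's (which proves nothing), and everything you actually argue is right, but it is a partial proof with the hardest direction still outsourced to the same references the paper cites.
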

We now recall a construction introduced in \cite{jen} (and also used in \cite{eghm, neg}), called \emph{disconnected rotation}.

\begin{definition}
Let $\mathcal{A}$ be a cancellative hoop. We define an algebra, $\mathcal{A}^*$, called the
\emph{disconnected rotation} of $\mathcal{A}$, as follows. Let $\mathcal{A}\times\{0\}$ be a disjoint copy of A. For every $a\in A$ we
write $a'$ instead of $\langle a, 0\rangle$. Consider $\lag A'= \{a' : a \in A\}, \leq\rog$
with the inverse order and let $A^*\df A\cup A'$. We extend these orderings to an order in $A^*$ by putting $a' < b$
for every $a,b\in A$. Finally, we take the following operations in $A^*$: $1\df 1_\mathcal{A}$, $0\df 1'$, $\sqcap_{\mathcal{A}^*}, \sqcup_{\mathcal{A}^*}$ as the meet and the join with respect to the order over $A^*$. Moreover,
\begin{align*}
\sim_{\mathcal{A}^*} a\df&\begin{cases}
a'&\text{if }a\in A\\
b &\text{if }a=b'\in A'
         \end{cases}\\
a*_{\mathcal{A}^*}b\df&\begin{cases}
a*_\mathcal{A} b&\text{if }a, b\in A\\
\sim_{\mathcal{A}^*}(a\Rightarrow_{\mathcal{A}} \sim_{\mathcal{A}^*}b)&\text{if }a\in A, b\in A'\\
\sim_{\mathcal{A}^*}(b\Rightarrow_{\mathcal{A}} \sim_{\mathcal{A}^*}a)&\text{if }a\in A', b\in A\\
0&\text{if }a, b\in A'
         \end{cases}\\
a\Rightarrow_{\mathcal{A}^*}b\df&\begin{cases}
a\Rightarrow_\mathcal{A} b&\text{if }a, b\in A\\
\sim_{\mathcal{A}^*}(a *_{\mathcal{A}^*} \sim_{\mathcal{A}^*}b)&\text{if }a\in A, b\in A'\\
1&\text{if }a\in A', b\in A\\
(\sim_{\mathcal{A}^*}b) \Rightarrow_{\mathcal{A}} (\sim_{\mathcal{A}^*}a)&\text{if }a, b\in A'.
         \end{cases}
\end{align*}
\end{definition}

\begin{theorem}[{\cite[theorem 9]{neg}}]\label{teo:perfrot}
Let $\mathcal{A}$ be an MV-algebra. The followings are equivalent:
\begin{itemize}
 \item A is a perfect MV-algebra.
 \item A is isomorphic to the disconnected rotation of a cancellative hoop.
\end{itemize}
\end{theorem}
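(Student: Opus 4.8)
The plan is to prove the two implications separately; the direction from the rotation to perfectness is routine, while the converse carries the real content. Throughout I use that $\mathcal{A}$ is an MV-algebra, so all the usual MV-identities (in particular $x*\sim x=0$, $\sim(x*y)=x\Rightarrow\sim y$, $a\Rightarrow\sim b=\sim(a*b)$, and contraposition $p\Rightarrow q=\sim q\Rightarrow\sim p$) are available.

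For the easy direction, suppose $\mathcal{A}\cong\mathcal{C}^*$ for a cancellative hoop $\mathcal{C}$, and write $C,C'$ for the two copies. I would compute orders directly from the construction. Every element $a$ of the top copy $C$ satisfies $a^n\in C$ for all $n$, since $C$ is closed under the restricted operation $*_{\mathcal{C}^*}=*_{\mathcal{C}}$; as $0=1'\in C'$ and $C\cap C'=\emptyset$, we get $a^n\neq 0$, so $ord(a)=\infty$. Conversely, for $a'\in C'$ the last clause in the definition of $*_{\mathcal{C}^*}$ gives $a'*a'=0$, hence $ord(a')\le 2<\infty$. Since $\sim_{\mathcal{C}^*}$ is the involution exchanging $C$ and $C'$, for every $x$ exactly one of $x,\sim x$ has finite order, which is precisely the defining condition of a perfect MV-algebra.

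For the converse, let $\mathcal{A}$ be perfect and set $U\df\{x:ord(x)=\infty\}$ (the coradical) and $L\df\{x:ord(x)<\infty\}$ (the radical). Perfectness states exactly that $ord(x)<\infty$ iff $ord(\sim x)=\infty$, so $A=U\sqcup L$ and $\sim$ restricts to mutually inverse bijections $U\to L$ and $L\to U$. The structural facts I would establish first are: (i) $L$ is the unique maximal ideal and $\mathcal{A}/L\cong\mathbf{2}$, the two-element Boolean algebra, so the quotient map $\pi\colon\mathcal{A}\to\mathbf{2}$ has kernel $L$ and sends $U$ to $1$; and (ii) every $x\in L$ satisfies $x\le\sim x$, whence $x^2\le x*\sim x=0$, and since $L$ is closed under $\sqcup$, any two elements of $L$ multiply to $0$ (so also every element of $L$ lies below every element of $U$). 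From (i), applying $\pi$ shows $U$ is closed under $*$ and $\Rightarrow$ (in $\mathbf{2}$ both send the pair $1,1$ to $1$), so $\mathcal{C}\df\langle U,*,\Rightarrow,1\rangle$ is a subhoop of the $0$-free reduct of $\mathcal{A}$, which is a Wajsberg hoop by \Cref{prop:canc}; hence $\mathcal{C}$ is Wajsberg.

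It then remains to check that $\mathcal{C}$ is cancellative and that $\mathcal{A}\cong\mathcal{C}^*$. For cancellativity I must verify $b\Rightarrow(a*b)=a$ for $a,b\in U$: the inequality $\ge$ is immediate from residuation, while the reverse, writing $c=b\Rightarrow(a*b)$ and noting $c*b=a*b$, is where perfectness (rather than mere locality) is genuinely needed; I would obtain it either by a direct cancellation argument or, more cleanly, through the Di Nola--Lettieri correspondence identifying $U$ with the negative cone of a torsion-free abelian $\ell$-group, where cancellation is automatic. Finally I define $\phi\colon\mathcal{C}^*\to\mathcal{A}$ by $\phi(a)=a$ for $a\in C=U$ and $\phi(a')=\sim a$ for $a'\in C'$; bijectivity of $\sim\colon U\to L$ makes $\phi$ a bijection onto $U\sqcup L=A$, and I would verify it is a homomorphism by matching the four cases of $*_{\mathcal{C}^*}$ and $\Rightarrow_{\mathcal{C}^*}$ against the MV-identities listed above, using fact (ii) for the clauses $a'*b'=0$ and $a'\Rightarrow b=1$. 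I expect the cancellation law for $\mathcal{C}$ to be the main obstacle, since it is the one step that truly uses perfectness; the closure of $U$ and the verification that $\phi$ respects the operations are, once (i) and (ii) are in hand, essentially bookkeeping.
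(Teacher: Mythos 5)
The paper offers no proof of this statement to compare against: it is imported verbatim from the cited reference, so your attempt can only be judged on its own terms. On those terms the architecture is the standard and correct one. The easy direction is complete as written. In the converse, the decomposition $A=U\sqcup L$, the closure of $U$ under $*$ and $\Rightarrow$ via the quotient onto $\mathbf{2}$, and the four-case verification that $a\mapsto a$, $a'\mapsto\sim a$ is an isomorphism all go through as you indicate. Two caveats on your ``structural facts'': for a not necessarily totally ordered $\mathcal{A}$, facts (i) and (ii) are most easily obtained by noting that each subdirect factor of $\mathcal{A}$ is a chain lying in $\mathbf{V}(C)$, hence local, hence perfect by \Cref{teo:perfcha}, so that $x\le\sim x$ for $x\in L$ can be checked coordinatewise (in a perfect chain it is immediate: $\sim x<x$ would give $ord(\sim x)\le ord(x)<\infty$, contradicting perfectness).

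The one genuine gap is the step you yourself flag: cancellativity of $U$. ``Either a direct argument or Di Nola--Lettieri'' is not a proof; the latter does work but imports a representation theorem essentially equivalent in strength to the statement being proved. There is, however, an elementary closure that stays inside the paper's toolkit and avoids any computation with $b\Rightarrow(a*b)$. Reduce to perfect chains as above (note $a\in U$ iff $a_i\in U_i$ in every factor, since $x\in L$ iff $x^2=0$; an embedding of $U(\mathcal{A})$ into the product of the cancellative hoops $U(\mathcal{A}_i)$ then transfers the cancellation equation). In a perfect chain, $U$ is a totally ordered Wajsberg hoop, being a subhoop of the $0$-free reduct of $\mathcal{A}$. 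If $U$ had a minimum $m$, then $m^2\in U$ and $m^2\le m$ force $m^2=m$; but $m^2=m$ gives $m\sqcap\sim m=m*(m\Rightarrow\sim m)=m*\sim(m^2)=m*\sim m=0$, so in a chain $m\in\{0,1\}$, and $m\ne 0$ yields $U=\{1\}$, which is trivially cancellative. Otherwise $U$ is an unbounded totally ordered Wajsberg hoop, hence cancellative by \Cref{prop:canc}. With that in place the remaining bookkeeping is exactly as you describe.
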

To conclude the section, we present the definition of ordinal sum.
\begin{definition}[\cite{am}]
Let $\langle I,\le \rangle $ be a totally ordered set with minimum $0$.
For all $i\in I$, let $\mathcal{A}_{i}$ be a hoop such that for $i\ne j$, $%
A_{i}\cap A_{j}=\{1\}$, and assume that $\mathcal{A}_0$ is bounded.
Then $\bigoplus_{i\in I}{\mathcal{A}}_{i}$ (the \emph{ordinal sum} of the
family $({\mathcal{A}}_{i})_{i\in I}$) is the structure whose base set is $%
\bigcup_{i\in I}A_{i}$, whose bottom is the minimum of $\mathcal{A}_0$,
whose top is $1$, and whose operations are
\begin{align*}
x\Rightarrow y&=\begin{cases}
x\Rightarrow ^{{\mathcal{A}}_{i}}y & \mathrm{if} \,\,x,y\in A_{i} \\
y & \mathrm{if}\,\, \exists i>j(x\in A_{i}\,\,\mathrm{and}\,\,y\in A_{j}) \\
1 & \mathrm{if}\,\,\exists i<j(x\in A_{i}\setminus \{1\}\,\,\mathrm{and%
}\,\,y\in A_{j})
\end{cases}\\
x* y&=\begin{cases}
x * ^{{\mathcal{A}}_{i}}y & \mathrm{if}\,\,x,y\in A_{i} \\
x & \mathrm{if}\,\,\exists i<j(x\in A_{i}\setminus\{1\},\,\,y\in A_{j})\\
y & \mathrm{if}\,\,\exists i<j(y\in A_{i}\setminus\{1\},\,x\in A_{j})
\end{cases}
\end{align*}
When defining the ordinal sum $\bigoplus_{i\in I}{\mathcal{A}}_{i}$ we will
tacitly assume that whenever the condition $A_{i}\cap A_{j}=\left\{
1\right\} $ is not satisfied for all $i,j\in I$ with $i\neq j$, we will
replace the $\mathcal{A}_{i}$ by isomorphic copies satisfying such condition. Moreover if all $\mathcal{A}_i$'s are isomorphic to some $\mathcal{A}$, then we will write $I\mathcal{A}$, instead of $\bigoplus_{i \in I}\mathcal{A}_{i}$. Finally, the ordinal sum of two hoops $\mathcal{A}$ and $\mathcal{B}$ will be denoted by $\mathcal{A}\oplus\mathcal{B}$.
\end{definition}
Note that, since every bounded Wajsberg hoop is the $0$-free reduct of an MV-algebra, then the previous definition also works with these structures.
\begin{theorem}[{\cite[theorem 3.7]{am}}]\label{teo:am}
Every BL-chain is isomorphic to an ordinal sum whose first component is an MV-chain and the others are totally ordered Wajsberg hoops.
\end{theorem}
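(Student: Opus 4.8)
The plan is to exhibit the BL-chain as the ordinal sum of its maximal ``Wajsberg blocks'', cut out by a single residuation-based relation. Fix a BL-chain $\mathcal{A}$ and, for $a\le b$, declare $a$ and $b$ to lie in the same block when $b\Rightarrow a>a$, with $1$ added to every block by convention. Since $\mathcal{A}$ is totally ordered every pair is comparable, so this prescription gives a reflexive symmetric relation $\sim$ on $A$ (reflexivity because $a\Rightarrow a=1>a$ for $a<1$). The first task is to show $\sim$ is an equivalence relation whose classes are convex. One half of convexity is immediate: if $a\le b\le c$ and $c\Rightarrow a>a$, then by antitonicity of $\Rightarrow$ in its first argument $b\Rightarrow a\ge c\Rightarrow a>a$, so $a\sim b$.

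The heart of the argument -- and the step I expect to be the main obstacle -- is a \emph{separation lemma} describing, for a fixed $c$, exactly which $x\le c$ satisfy the fixpoint equation $c\Rightarrow x=x$ (equivalently, by (div), $c*x=x$). For the upper half of convexity the lemma has a clean proof via (div): assuming $a\le b\le c$, $c\Rightarrow a=g>a$, and (for contradiction) $c\Rightarrow b=b$, one gets $g=c\Rightarrow a\le c\Rightarrow b=b$, whence $g=b*(b\Rightarrow g)$ and $c*g=(c*b)*(b\Rightarrow g)=b*(b\Rightarrow g)=g$; but (div) also gives $c*g=c*(c\Rightarrow a)=a$, forcing $g=a$, a contradiction. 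Transitivity -- that $a\sim b$ and $b\sim c$ imply $a\sim c$ for $a\le b\le c$ -- is the genuinely delicate part, because the natural multiplicative manipulations (using $(x*y)\Rightarrow z=y\Rightarrow(x\Rightarrow z)$ and $c\Rightarrow a=a\Rightarrow c*a=a$) pin down products but not the residua themselves: the degenerate case $a=0$ shows that $c*b=b$ can hold with $b\sim c$, so one cannot reduce the $\Rightarrow$-statement to a $*$-statement and must control $c\Rightarrow b$ directly, using residuation and (div) together.

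Once $\sim$ is known to be a convex equivalence relation, I would show each class $C$, with $1$ adjoined, is closed under $*$ and $\Rightarrow$ (every product or residuum of elements of $C$ either stays in $C$ or equals $1$) and hence forms a totally ordered sub-hoop, the hoop identities being inherited from $\mathcal{A}$. I would then classify these sub-hoops. The class of $0$ is bounded, and any other class either has a minimum or has none; an unbounded class can contain no idempotent below $1$, since an interior idempotent would create absorption and split the block, so by \Cref{prop:canc} it is cancellative, hence a totally ordered Wajsberg hoop, while every bounded class is, again by \Cref{prop:canc}, the $0$-free reduct of an MV-chain. In particular the bottom class is an MV-chain, providing the required first component.

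Finally I would index the classes by the induced total order on blocks, with least element the class of $0$, and verify that the identity map realizes $\mathcal{A}$ as $\bigoplus_{C}C$. Inside a class the operations are by construction those of the sub-hoop. Across classes the matching is essentially automatic from the definition of $\sim$: if $a$ lies in a lower class and $b$ in a higher one, then $a<b$ gives $a\Rightarrow b=1$, the defining failure $a\not\sim b$ gives $b\Rightarrow a=a$, and (div) upgrades this to $a*b=a$, which are precisely the ordinal-sum clauses. Hence the identity is an isomorphism onto $\bigoplus_{C}C$ of the desired shape, and the only genuinely hard input is the transitivity half of the separation lemma.
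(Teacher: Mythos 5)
First, note that the paper does not prove this theorem at all: it is imported verbatim from Agliano--Montagna (and the paper points to Busaniche's article for a simpler proof), so there is no internal argument to compare yours against. Your outline is in fact the Busaniche-style direct decomposition: the relation ``$a\sim b$ iff $b\Rightarrow a>a$ (for $a\le b<1$)'' is the right one, your two convexity computations are correct (the (div)-based contradiction for the upper half is exactly right), and the cross-block verification of the ordinal-sum clauses ($a\Rightarrow b=1$, $b\Rightarrow a=a$, $a*b=a$) is fine. The problem is that the two places where all the real work of the theorem lives are precisely the places you do not carry out.

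The first gap is the one you flag yourself: transitivity of $\sim$. Without it there is no partition into blocks, and it is a genuine lemma, not a routine manipulation. For instance, assuming $c\Rightarrow a=a$ one can show $c\Rightarrow(b\Rightarrow a)=b\Rightarrow a$ and iterate $g_{n+1}=b\Rightarrow g_n$ upward from $a$, but this iteration need not reach $b$ (in Chang's algebra $C$ with $b=a_1$, $a=b_1$ one gets $g_n=b_{n+1}$ forever), so the obvious attack stalls and a finer argument is required; announcing that one ``must control $c\Rightarrow b$ directly'' is not a proof. The second gap is worse because it is presented as if it were done: you never establish that a block is a \emph{Wajsberg} hoop. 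The hoop identities are inherited from $\mathcal{A}$, but the Wajsberg identity $(x\Rightarrow y)\Rightarrow y=(y\Rightarrow x)\Rightarrow x$ is not valid in all BL-chains; what must be proved is that $(y\Rightarrow x)\Rightarrow x=y$ whenever $x\le y$ lie in the same block, i.e.\ whenever $y\Rightarrow x>x$. You instead invoke \Cref{prop:canc}, but that proposition \emph{presupposes} the hoop is Wajsberg, and your fallback for the unbounded case --- ``no idempotent below $1$, hence cancellative'' --- is false for general totally ordered basic hoops: $(0,1]_C\oplus(0,1]_C$ is unbounded, totally ordered, has no idempotent below $1$, and is not cancellative (for $a$ in the lower and $b$ in the upper component, $a\Rightarrow(b*a)=a\Rightarrow a=1\ne b$). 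So the classification of blocks cannot be obtained by counting idempotents; you need the Wajsberg identity on blocks first, and that, together with transitivity, is exactly the content of the cited theorem.
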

Note that in \cite{bus} it is presented an alternative and simpler proof of this result.
\section{The variety of BL$_\text{Chang}$-algebras}\label{sec:blcha}
Consider the following connective
\begin{equation*}
\varphi\veebar\psi\df((\varphi \rightarrow (\varphi \& \psi ))\rightarrow\psi )\land ((\psi \rightarrow (\varphi \& \psi ))\rightarrow \varphi )
\end{equation*}
Call $\uplus$ the algebraic operation, over a BL-algebra, corresponding to $\veebar$; we have that
\begin{lemma}\label{lem:disgeq}
In every MV-algebra the following equation holds
\begin{equation*}
x\uplus y=x\oplus y.
\end{equation*}
\end{lemma}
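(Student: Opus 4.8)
The plan is to unwind $\uplus$ into the MV-language and reduce the claim to a single identity. Writing out the algebraic operation corresponding to $\veebar$ we have
\[
x\uplus y=\bigl((x\Rightarrow(x*y))\Rightarrow y\bigr)\sqcap\bigl((y\Rightarrow(x*y))\Rightarrow x\bigr).
\]
Since $\oplus$ is commutative and the two conjuncts are obtained from one another by swapping $x$ and $y$ (using commutativity of $*$, so that $x*y=y*x$), it suffices to prove the single identity $(x\Rightarrow(x*y))\Rightarrow y=x\oplus y$; the second conjunct then equals $y\oplus x=x\oplus y$ as well, and the meet of two equal elements is that element.

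The key step is to compute the inner term $x\Rightarrow(x*y)$ up to negation. Recall the standard MV-identities (see \cite{mun, cig}): $u\Rightarrow v=\sim u\oplus v$, $\sim(u\oplus v)=\sim u*\sim v$, and $\sim(u*v)=\sim u\oplus\sim v$; all follow at once from the definition of $\oplus$ and from (inv). Using these, $\sim\bigl(x\Rightarrow(x*y)\bigr)=\sim\bigl(\sim x\oplus(x*y)\bigr)=x*\sim(x*y)=x*(\sim x\oplus\sim y)$. Now by (div) together with $u\Rightarrow v=\sim u\oplus v$ we have $x*(\sim x\oplus z)=x*(x\Rightarrow z)=x\sqcap z$, so taking $z=\sim y$ gives $\sim\bigl(x\Rightarrow(x*y)\bigr)=x\sqcap\sim y$.

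It remains to feed this back. We obtain $(x\Rightarrow(x*y))\Rightarrow y=\sim\bigl(x\Rightarrow(x*y)\bigr)\oplus y=(x\sqcap\sim y)\oplus y$. Using the distributivity of $\oplus$ over $\sqcap$ and the fact that $y\oplus\sim y=1$ (which itself follows from $y*\sim y=y*(y\Rightarrow 0)=y\sqcap 0=0$), this becomes $(x\oplus y)\sqcap(\sim y\oplus y)=(x\oplus y)\sqcap 1=x\oplus y$, as required. I expect the only delicate point to be bookkeeping with the MV-identities, in particular establishing $x*(\sim x\oplus z)=x\sqcap z$, rather than any genuine obstacle; everything reduces to (div), (inv) and the definition of $\oplus$. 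As an independent check, and an alternative proof, since the variety of MV-algebras is generated by $[0,1]_\text{\L}$ it suffices to verify $x\uplus y=x\oplus y$ there, where both sides evaluate to $\min(1,x+y)$ by a direct case split on whether $x+y\ge 1$.
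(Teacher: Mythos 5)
Your argument is correct, but it takes a genuinely different route from the paper. The paper's proof is a one-line verification on the standard MV-algebra: it checks $x\uplus y=x\oplus y$ over $[0,1]_\text{\L}$ and implicitly invokes Chang's completeness theorem (that $[0,1]_\text{\L}$ generates the variety of MV-algebras) to transfer the identity to all MV-algebras --- exactly the ``independent check'' you relegate to your last sentence. Your main proof is instead a direct equational derivation valid in an arbitrary MV-algebra: reducing to the single conjunct by symmetry, computing $\sim\bigl(x\Rightarrow(x*y)\bigr)=x\sqcap\sim y$ via (div) and the De Morgan identities for $\oplus$ and $*$, and finishing with the distributivity of $\oplus$ over $\sqcap$ and $y\oplus\sim y=1$. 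All the identities you use ($u\Rightarrow v=\sim u\oplus v$, the De Morgan laws, distributivity of $\oplus$ over $\sqcap$) are standard MV-algebra facts, so the chain of equalities is sound. What your approach buys is self-containedness: it does not rest on the (nontrivial) generation theorem for MV-algebras, and it exhibits the identity as a consequence of (div) and (inv) alone. What the paper's approach buys is brevity, and it is the natural one in context, since the surrounding results (\Cref{prop:disj}, \Cref{cor:disjcanc}) are likewise obtained by checking on generating chains. Either proof is acceptable here.
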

\begin{proof}
It is easy to check that $x\uplus y=x\oplus y$, over $[0,1]_{MV}$, for every $x,y\in [0,1]$.
\end{proof}
We now analyze this connective in the context of Wajsberg hoops.
\begin{proposition}\label{prop:disj}
Let $\mathcal{A}$ be a linearly ordered Wajsberg hoop. Then
\begin{itemize}
\item If $\mathcal{A}$ is unbounded (i.e. a cancellative hoop), then $x\uplus y=1$, for every $x,y\in\mathcal{A}$.
\item If $\mathcal{A}$ is bounded, let $a$ be its minimum. Then, by defining $\sim x\df x\Rightarrow a$ and $x\oplus y=\sim(\sim x*\sim y)$ we have that $x\oplus y=x\uplus y$, for every $x,y\in\mathcal{A}$
\end{itemize}
\end{proposition}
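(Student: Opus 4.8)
The plan is to work directly from the algebraic reading of $\veebar$, namely
\[
x \uplus y = \bigl((x \Rightarrow (x*y)) \Rightarrow y\bigr) \sqcap \bigl((y \Rightarrow (x*y)) \Rightarrow x\bigr),
\]
and to observe that this term uses only $*$, $\Rightarrow$ and $\sqcap$, all of which are available in any hoop (recall that $\sqcap$ is term-definable by $x\sqcap y = x*(x\Rightarrow y)$). Hence $\uplus$ is meaningful in $\mathcal{A}$ regardless of whether $\mathcal{A}$ is bounded, and I would simply split the argument according to the two cases in the statement.

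For the unbounded (cancellative) case I would exploit the cancellative law $x = y \Rightarrow (x*y)$ directly. Read with the outer variable taken to be the one that also occurs in the product, it expresses that dividing a product by one of its factors cancels that factor; concretely it yields $y \Rightarrow (x*y) = x$ and, after commuting the product, $x \Rightarrow (x*y) = y$. Substituting these two identities into the displayed term collapses the two conjuncts to $y \Rightarrow y$ and $x \Rightarrow x$ respectively, each equal to $1$, so that $x \uplus y = 1 \sqcap 1 = 1$. This is the only genuine computation in the whole proof.

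For the bounded case I would avoid recomputing anything and instead reduce to the MV-algebra setting already settled in \Cref{lem:disgeq}. By \Cref{prop:canc}, a bounded linearly ordered Wajsberg hoop $\mathcal{A}$ is the $0$-free reduct of an MV-algebra whose $0$ is precisely the minimum $a$. Under this identification the prescription $\sim x \df x \Rightarrow a$ is exactly the MV-algebra negation, so the operation $x \oplus y = \sim(\sim x * \sim y)$ defined in the statement coincides with the MV-algebra $\oplus$; and since $\uplus$ is the same $*,\Rightarrow,\sqcap$-term whether evaluated in the hoop or in its MV-expansion, its values agree as well. \Cref{lem:disgeq} then gives $x \uplus y = x \oplus y$ at once.

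The point requiring the most care, and the place where I would be most explicit, is the legitimacy of this reduction in the bounded case: one must check that $\uplus$ is genuinely term-definable in the pure hoop signature (so that its interpretation is unchanged when passing between $\mathcal{A}$ and its MV-expansion) and that the negation furnished by $x \Rightarrow a$ matches the MV negation exactly. Both are immediate once \Cref{prop:canc} is invoked, but they are what make the one-line appeal to \Cref{lem:disgeq} valid. The unbounded case, by contrast, is entirely self-contained and needs nothing beyond the cancellative identity.
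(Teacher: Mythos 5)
Your proposal is correct and matches the paper, whose entire proof is the remark ``an easy check'': your computation with the cancellative identity $y\Rightarrow(x*y)=x$ in the unbounded case, and the reduction of the bounded case to the MV-algebra identity of \Cref{lem:disgeq} via \Cref{prop:canc}, is precisely the check the author leaves to the reader. The only addition is your explicit care that $\uplus$ is a term in the hoop signature (with $\sqcap$ given by $x*(x\Rightarrow y)$) and that $x\Rightarrow a$ is the MV negation, which is a reasonable thing to spell out but does not change the argument.
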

\begin{proof}
An easy check.
\end{proof}
Now, since the variety of cancellative hoops is generated by its linearly ordered members (see \cite{eghm}), then we have that
\begin{corollary}\label{cor:disjcanc}
The equation $x\uplus y=1$ holds in every cancellative hoop.
\end{corollary}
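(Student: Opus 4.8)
The plan is to reduce the statement for arbitrary cancellative hoops to the linearly ordered case already settled in \Cref{prop:disj}, exploiting the fact that $x\uplus y=1$ is a genuine equation in the hoop signature and that validity of an equation transfers from a generating subclass to the whole variety it generates.

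First I would observe that $\uplus$ is term-definable in the pure hoop language $\{*,\Rightarrow,1\}$: the connective $\veebar$ is built only from $\&$, $\to$ and $\land$, and in any hoop the meet is definable by $x\sqcap y=x*(x\Rightarrow y)$. Consequently $x\uplus y=1$ is an honest equation over the signature of hoops (no appeal to $\bot$ or to $\sim$ is needed), and the subclass of cancellative hoops satisfying it is closed under the class operators $\mathbf{H}$, $\mathbf{S}$ and $\mathbf{P}$. This point, though routine, is the only one that requires care, since it is what makes the transfer argument legitimate.

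Next I would invoke the result of \cite{eghm} recalled just before the corollary: the variety of cancellative hoops is generated by its totally ordered members, i.e.\ every cancellative hoop lies in $\mathbf{HSP}$ of the class of linearly ordered cancellative hoops. By \Cref{prop:canc}, these totally ordered cancellative hoops are exactly the unbounded totally ordered Wajsberg hoops, so the first item of \Cref{prop:disj} already tells us that $x\uplus y=1$ holds in every one of these generators.

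Finally, since an equation that is valid in all members of a generating class is valid throughout the variety they generate (the models of the equation form a variety containing the generators, hence containing their $\mathbf{HSP}$-closure), the equation $x\uplus y=1$ must hold in every cancellative hoop. There is no substantive obstacle here: the real computation was carried out in \Cref{prop:disj}, and the corollary is merely the propagation of that identity from the linearly ordered members to the entire variety.
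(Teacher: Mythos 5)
Your argument is correct and is exactly the one the paper intends: the identity is checked on the linearly ordered cancellative hoops via \Cref{prop:disj}, and then propagated to the whole variety using the fact (from \cite{eghm}) that cancellative hoops are generated by their totally ordered members. The paper leaves the transfer step implicit, whereas you also verify that $x\uplus y=1$ is a genuine equation in the hoop signature (since $\land$ is term-definable there), which is a worthwhile point of care but not a different method.
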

We now characterize the behavior of $\uplus$ for the case of BL-chains.
\begin{proposition}\label{prop:disg}
Let $\mathcal{A}=\bigoplus_{i\in I}\mathcal{A}_i$ be a BL-chain. Then
\begin{equation*}
x\uplus y=\begin{cases}
x\oplus y,&\text{if }x,y\in \mathcal{A}_i\text{ and }\mathcal{A}_i\text{ is bounded }\\
1,&\text{if }x,y\in \mathcal{A}_i\text{ and }\mathcal{A}_i\text{ is unbounded }\\
\max(x,y),&\text{otherwise}.
\end{cases}
\end{equation*}
for every $x,y\in\mathcal{A}$.
\end{proposition}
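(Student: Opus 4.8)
The plan is to work directly from the term that defines $\uplus$, namely
\[
x\uplus y=((x\Rightarrow(x*y))\Rightarrow y)\sqcap((y\Rightarrow(x*y))\Rightarrow x),
\]
and to exploit the fact that in an ordinal sum each of $*$, $\Rightarrow$ and $\sqcap$ is governed by which components the arguments occupy. First I would record that $\uplus$ is commutative: since $*$ is commutative we have $x*y=y*x$, so interchanging $x$ and $y$ merely swaps the two conjuncts of the meet. This lets me treat the cross-component case under the assumption $x<y$ only. I would then split the argument according to whether $x$ and $y$ lie in a common component $\mathcal{A}_i$ or not, noting that the top element $1$ belongs to every $A_i$, so any pair in which one argument equals $1$ already falls under the common-component case (and is harmless, since the stated formula then returns $1=\max$ anyway).

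For the common-component case I would observe that if $x,y\in A_i$ then every intermediate value appearing in the defining term stays inside $A_i$: indeed $x*y\in A_i$ by closure, and for arguments both lying in $A_i$ the ordinal-sum clauses for $*$ and $\Rightarrow$ return exactly the internal operations $*^{\mathcal{A}_i}$ and $\Rightarrow^{\mathcal{A}_i}$, while $\sqcap$ is just the minimum for the order, which on $A_i$ agrees with the order of $\mathcal{A}_i$. Hence $x\uplus y$ computed in $\mathcal{A}$ coincides with $x\uplus y$ computed in the standalone linearly ordered Wajsberg hoop $\mathcal{A}_i$, and \Cref{prop:disj} delivers the first two cases at once: the value $1$ when $\mathcal{A}_i$ is unbounded, and $x\oplus y$ (relative to the minimum of $\mathcal{A}_i$) when it is bounded.

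The remaining case is a short direct computation, which is where the only real care is needed. Assuming $x\in A_i\setminus\{1\}$ and $y\in A_j\setminus\{1\}$ with $i<j$ (so $x<y$ and $\max(x,y)=y$), the ordinal-sum rules give $x*y=x$, whence $x\Rightarrow(x*y)=x\Rightarrow x=1$ and the first conjunct is $1\Rightarrow y=y$. For the second conjunct, $y\Rightarrow(x*y)=y\Rightarrow x=x$ (the residuum of a higher-component element into a lower one returns the lower one), so $(y\Rightarrow(x*y))\Rightarrow x=x\Rightarrow x=1$. Taking the meet yields $y\sqcap 1=y=\max(x,y)$, as required, and commutativity covers the case $i>j$. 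The main obstacle is nothing deep but purely a matter of bookkeeping: selecting the correct clause of the ordinal-sum definitions of $*$ and $\Rightarrow$ at each step and keeping track of which argument sits in the higher component, together with the preliminary check that the common-component computation really does remain internal to $\mathcal{A}_i$, so that \Cref{prop:disj} applies verbatim.
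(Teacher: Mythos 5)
Your proof is correct and follows essentially the same route as the paper's: the common-component case is reduced to \Cref{prop:disj} (after checking the computation stays internal to $\mathcal{A}_i$), and the cross-component case is settled by the direct computation with the ordinal-sum clauses, which you carry out explicitly and correctly. The paper's own proof is just a terser version of exactly this argument.
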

\begin{proof}
If $x,y$ belong to the same component of $\mathcal{A}$, then the result follows from \Cref{lem:disgeq} and \Cref{prop:disj}. For the case in which $x$ and $y$ belong to different components of $\mathcal{A}$, this is a direct computation.
\end{proof}
\begin{remark}
From the previous proposition we can argue that $\uplus$ is a good approximation, for BL, of what that $\oplus$ represents for MV-algebras. Note that a similar operation was introduced in \cite{abm}: the main difference with respect to $\uplus$ is that, when $x$ and $y$ belong to different components of a BL-chain, then the operation introduced in \cite{abm} holds $1$.
\end{remark}
In the following, for every element $x$ of a BL-algebra, with the notation $\overline{n}x$ we will denote $\underbrace{x\uplus\dots\uplus x}_{n\ times}$; analogously $\overline{n}\varphi$ means $\underbrace{\varphi\veebar\dots\veebar\varphi}_{n\ times}$.
\begin{definition}
We define BL$_\text{Chang}$ as the axiomatic extension of BL, obtained by adding
\begin{equation}
\tag{cha}(\overline{2}\varphi)^2\leftrightarrow \overline{2}(\varphi^2).
\end{equation}
That is, writing it in extended form
\begin{equation*}
(\varphi^2\to(\varphi^2\&\varphi^2)\to\varphi^2)\leftrightarrow((\varphi\to\varphi^2)\to\varphi)^2.
\end{equation*}
\end{definition}
Clearly the variety corresponding to BL$_\text{Chang}$ is given by the class of BL-algebras satisfying the equation $(\overline{2}x)^2=\overline{2}(x^2)$.

Moreover,
\begin{definition}\label{def:pseudoperf}
We will call pseudo-perfect Wajsberg hoops those Wajsberg hoops satisfying the equation $(\overline{2}x)^2=\overline{2}(x^2)$.
\end{definition}
\begin{remark}\label{rem:2}
Thanks to \Cref{lem:disgeq} we have that
\begin{equation*}
\vdash_\text{\L}((\overline{2}\varphi)^2\leftrightarrow \overline{2}(\varphi^2))\leftrightarrow ((2\varphi)^2\leftrightarrow 2(\varphi^2)),
\end{equation*}
that is, if we add $(\overline{2}\varphi)^2\leftrightarrow \overline{2}(\varphi^2)$ or $(2\varphi)^2\leftrightarrow 2(\varphi^2)$ to {\L}, then we obtain the same logic \L$_\text{Chang}$.

These formulas, however are not equivalent over BL: see \Cref{rem:p0} for details.
\end{remark}
\begin{theorem}\label{teo:chainpswh}
Every totally ordered pseudo-perfect Wajsberg hoop is a totally ordered cancellative hoop or (the $0$-free reduct of) a perfect MV-chain.

More in general, the variety of pseudo-perfect Wajsberg hoops coincides with the class of the $0$-free subreducts of members of $\mathbf{V}(C)$.
\end{theorem}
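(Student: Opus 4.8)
The plan is to treat the two assertions separately, deriving the general statement about the variety from the classification of the chains together with a subdirect representation argument. Throughout I write $(\cdot)^{-}$ for the $0$-free reduct (to the hoop language $\{*,\Rightarrow,1\}$, in which $\sqcap$ and, for Wajsberg structures, also $\sqcup$ are definable).

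For the first assertion, let $\mathcal{A}$ be a totally ordered pseudo-perfect Wajsberg hoop and split according to \Cref{prop:canc}. If $\mathcal{A}$ is unbounded, then it is already a totally ordered cancellative hoop and there is nothing more to prove; indeed, by \Cref{cor:disjcanc} every cancellative hoop satisfies $\overline{2}x=1$, so $(\overline{2}x)^2=\overline{2}(x^2)$ holds trivially and the unbounded chains are automatically pseudo-perfect. If instead $\mathcal{A}$ is bounded, then by \Cref{prop:canc} it is the $0$-free reduct of an MV-chain $\mathcal{M}$. Since $\mathcal{A}$ is bounded, \Cref{prop:disj} gives $x\uplus y=x\oplus y$, so $\overline{2}x$ coincides with $2x$ computed in $\mathcal{M}$, and the defining identity $(\overline{2}x)^2=\overline{2}(x^2)$ becomes exactly Chang's equation $(2x)^2=2(x^2)$ in $\mathcal{M}$. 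By \Cref{teo:chaeq} this places $\mathcal{M}\in\mathbf{V}(C)$, and since $\mathcal{M}$ is a chain, the remark following \Cref{teo:perfcha} (the chains of $\mathbf{V}(C)$ are precisely the perfect MV-chains) shows that $\mathcal{M}$ is a perfect MV-chain, as required.

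For the general statement I would prove the two inclusions. The inclusion of the $0$-free subreducts into the variety is soft: a member $\mathcal{M}$ of $\mathbf{V}(C)$ satisfies $(2x)^2=2(x^2)$ by \Cref{teo:chaeq}, and $\uplus=\oplus$ holds in every MV-algebra by \Cref{lem:disgeq}, so $\mathcal{M}$ satisfies $(\overline{2}x)^2=\overline{2}(x^2)$; as this is an identity of the $0$-free hoop language, and both the class of Wajsberg hoops and this identity are preserved by subalgebras of reducts, every $0$-free subreduct of $\mathcal{M}$ is a pseudo-perfect Wajsberg hoop. For the reverse inclusion, let $\mathcal{A}$ be an arbitrary pseudo-perfect Wajsberg hoop. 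Relying on the representability of Wajsberg hoops (every Wajsberg hoop is a subdirect product of totally ordered Wajsberg hoops, see \cite{bf,afm}) and on the fact that the pseudo-perfect subvariety is closed under homomorphic images, I obtain an embedding $\mathcal{A}\hookrightarrow\prod_{i}\mathcal{A}_i$ with each $\mathcal{A}_i$ a totally ordered pseudo-perfect Wajsberg hoop. By the first assertion each $\mathcal{A}_i$ is either a cancellative hoop or the $0$-free reduct of a perfect MV-chain. In the first case $\mathcal{A}_i$ embeds, as a subhoop, into its disconnected rotation $\mathcal{A}_i^{*}$ (it is the positive part $(\mathcal{A}_i^{*})^{+}$ in the sense of \Cref{def:bound}), which by \Cref{teo:perfrot} is a perfect MV-algebra and hence lies in $Perfect(MV)\subseteq\mathbf{V}(C)$; in the second case $\mathcal{A}_i$ is already $\mathcal{M}_i^{-}$ for a perfect MV-chain $\mathcal{M}_i\in\mathbf{V}(C)$. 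Writing $\mathcal{M}_i$ for the chosen member of $\mathbf{V}(C)$ in either case, the embeddings assemble into $\mathcal{A}\hookrightarrow\prod_i\mathcal{A}_i\hookrightarrow\prod_i\mathcal{M}_i^{-}=\bigl(\prod_i\mathcal{M}_i\bigr)^{-}$, and $\prod_i\mathcal{M}_i\in\mathbf{V}(C)$ since varieties are closed under products; thus $\mathcal{A}$ is a $0$-free subreduct of a member of $\mathbf{V}(C)$.

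The main obstacle is the representability invoked in the last step: the decomposition of an arbitrary pseudo-perfect Wajsberg hoop into a subdirect product of chains is the one genuinely structural input, and I would cite the hoop-structure theory of \cite{bf,afm} rather than reprove it, noting that the subdirect factors remain pseudo-perfect because the defining identity is preserved under quotients. The remaining points are bookkeeping that I would verify carefully rather than grind out: that in the hoop signature the positive part of a disconnected rotation is indeed a subhoop isomorphic to the original cancellative hoop, and that the product of $0$-free reducts equals the $0$-free reduct of the product, so that the assembled map is an honest embedding into a $0$-free subreduct of $\prod_i\mathcal{M}_i$.
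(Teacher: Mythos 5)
Your proof is correct, and for the first assertion it is exactly the paper's route: split a totally ordered pseudo-perfect Wajsberg hoop by \Cref{prop:canc} into the unbounded (cancellative) and bounded (MV-chain reduct) cases, and in the bounded case use $\uplus=\oplus$ together with \Cref{teo:chaeq} to land in $\mathbf{V}(C)$. For the second assertion the paper's proof is only a one-line appeal to the fact (from the cited literature) that Wajsberg hoops are the $0$-free subreducts of MV-algebras, and that citation alone does not obviously give the harder inclusion: knowing that a pseudo-perfect Wajsberg hoop is a subreduct of \emph{some} MV-algebra does not by itself place that MV-algebra in $\mathbf{V}(C)$, since the identity $(\overline{2}x)^2=\overline{2}(x^2)$ holding on the subreduct need not transfer to the ambient algebra. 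Your argument supplies precisely this missing content: decompose the hoop subdirectly into chains (legitimate, since the subdirectly irreducible factors are totally ordered and remain pseudo-perfect as homomorphic images), replace each cancellative factor by its disconnected rotation --- a perfect MV-algebra by \Cref{teo:perfrot}, hence in $\mathbf{V}(C)$ by \Cref{teo:perfcha} --- and each bounded factor by the perfect MV-chain it is a reduct of, then embed into the product, which stays in $\mathbf{V}(C)$. So your proof is best described as the same strategy as the paper's, but with the genuinely nontrivial step (the upgrade from ``subreduct of an MV-algebra'' to ``subreduct of a member of $\mathbf{V}(C)$'') made explicit and correct; the only small caveat is that the subdirect representation of Wajsberg hoops into chains is exactly the content of \Cref{cor:pssubir}, which appears later in the paper but rests on an independent external citation, so no circularity arises.
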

\begin{proof}
In \cite{eghm} it is shown that the variety of Wajsberg hoops coincides with the class of the $0$-free subreducts of MV-algebras. The results easily follow from this fact and from \Cref{prop:canc}, \Cref{teo:chaeq} and \Cref{def:pseudoperf}.
\end{proof}
As a consequence, we have
\begin{theorem}\label{teo:hoopincl}
Let $\mathbb{WH}, \mathbb{CH}, ps\mathbb{WH}$ be, respectively, the varieties of Wajsberg hoops, cancellative hoops, pseudo-perfect Wajsberg hoops. Then we have that
\begin{equation*}
\mathbb{CH}\subset ps\mathbb{WH} \subset \mathbb{WH}.
\end{equation*}
\end{theorem}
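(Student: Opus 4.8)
The plan is to prove the two inclusions $\mathbb{CH}\subseteq ps\mathbb{WH}$ and $ps\mathbb{WH}\subseteq\mathbb{WH}$ first, and then to exhibit a concrete witness separating each pair of classes, so as to upgrade both inclusions to strict ones. The second inclusion $ps\mathbb{WH}\subseteq\mathbb{WH}$ is immediate from \Cref{def:pseudoperf}: pseudo-perfect Wajsberg hoops are by definition Wajsberg hoops satisfying the single additional equation $(\overline{2}x)^2=\overline{2}(x^2)$, so $ps\mathbb{WH}$ is a subvariety of $\mathbb{WH}$. For the inclusion $\mathbb{CH}\subseteq ps\mathbb{WH}$, I would argue as follows: by \Cref{prop:canc} every cancellative hoop is already Wajsberg, so it only remains to verify the defining equation of pseudo-perfect hoops. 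By \Cref{cor:disjcanc} the identity $x\uplus y=1$ holds throughout any cancellative hoop; hence $\overline{2}x=1$ and $\overline{2}(x^2)=1$, so both sides of $(\overline{2}x)^2=\overline{2}(x^2)$ collapse to $1^2=1$. The equation therefore holds, giving $\mathbb{CH}\subseteq ps\mathbb{WH}$.

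To see that the first inclusion is strict, I would produce a member of $ps\mathbb{WH}\setminus\mathbb{CH}$. By \Cref{teo:chainpswh} the $0$-free reduct of any perfect MV-chain is a pseudo-perfect Wajsberg hoop; taking Chang's algebra $C$, its $0$-free reduct still carries the element $b_0$ as a minimum of the order and is therefore a \emph{bounded} totally ordered Wajsberg hoop. By \Cref{prop:canc}, totally ordered cancellative hoops are precisely the \emph{unbounded} totally ordered Wajsberg hoops, so this reduct is not cancellative. Hence $\mathbb{CH}\subsetneq ps\mathbb{WH}$.

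For the strictness of the second inclusion I would exhibit a Wajsberg hoop that is not pseudo-perfect, namely the $0$-free reduct of the standard MV-algebra $[0,1]_\text{\L}$, which is a (bounded) Wajsberg hoop by \Cref{prop:canc}. In this bounded setting \Cref{prop:disj} gives $\uplus=\oplus$, so the pseudo-perfectness equation $(\overline{2}x)^2=\overline{2}(x^2)$ coincides with the Chang equation $(2x)^2=2(x^2)$ of \Cref{teo:chaeq}. Evaluating at $x=\tfrac12$ one computes $x^2=0$ and hence $2(x^2)=0$, while $2x=1$ and hence $(2x)^2=1$; since $1\neq 0$ the equation fails, so this Wajsberg hoop is not pseudo-perfect and $ps\mathbb{WH}\subsetneq\mathbb{WH}$.

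The only real point requiring care, and thus the main obstacle, is the bookkeeping around the operation $\uplus$: one must check that it genuinely trivializes to $1$ in the cancellative case and genuinely reduces to the MV-operation $\oplus$ in the bounded case, so that the abstract hoop equation of \Cref{def:pseudoperf} translates correctly into the trivial identity on one side and into the Chang equation of \Cref{teo:chaeq} on the other. Once this correspondence is in place—granted by \Cref{lem:disgeq}, \Cref{prop:disj} and \Cref{cor:disjcanc}—both inclusions and both strictness witnesses follow without further computation.
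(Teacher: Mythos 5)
Your proof is correct and follows essentially the same route as the paper: both inclusions rest on \Cref{teo:chainpswh}, \Cref{prop:canc} and the behaviour of $\uplus$ on cancellative versus bounded Wajsberg hoops, and the witness for $ps\mathbb{WH}\neq\mathbb{WH}$ (the $0$-free reduct of $[0,1]_{\text{\L}}$) is exactly the paper's. The only difference is that you are more explicit than the paper on two points it leaves implicit, namely the concrete witness (the reduct of Chang's algebra $C$) showing $\mathbb{CH}\neq ps\mathbb{WH}$, and the direct equational verification via \Cref{cor:disjcanc} where the paper instead invokes generation of $\mathbb{CH}$ by its chains.
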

\begin{proof}
An easy consequence of \Cref{teo:chainpswh}.

The first inclusion follows from the fact that $ps\mathbb{WH}$ contains all the totally ordered cancellative hoops and hence the variety generated by them. For the second inclusion note that, for example, the $0$-free reduct of $[0,1]_\text{\L}$ belongs to $\mathbb{WH}\setminus ps\mathbb{WH}$.
\end{proof}
We now describe the structure of BL$_\text{Chang}$-chains, with an analogous of the \Cref{teo:am} for BL-chains.
\begin{theorem}\label{teo:chainstruct}
Every BL$_\text{Chang}$-chain is isomorphic to an ordinal sum whose first component is a perfect MV-chain and the others are totally ordered pseudo-perfect Wajsberg hoops.

It follows that every ordinal sum of perfect MV-chains is a BL$_\text{Chang}$-chain.
\end{theorem}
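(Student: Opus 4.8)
The plan is to combine the ordinal-sum decomposition of \Cref{teo:am} with the component-wise description of $\uplus$ from \Cref{prop:disg}, so that the single global equation (cha) factors into a condition on each summand; both halves of the statement then drop out of this factorization.

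For the first assertion I would start from an arbitrary BL$_\text{Chang}$-chain $\mathcal{A}$ and invoke \Cref{teo:am} to write $\mathcal{A}=\bigoplus_{i\in I}\mathcal{A}_i$ with $\mathcal{A}_0$ an MV-chain and each $\mathcal{A}_i$ ($i>0$) a totally ordered Wajsberg hoop. The key point is that for any $x$ belonging to a single component $\mathcal{A}_i$, all of $x^2$, $\overline{2}x$, $(\overline{2}x)^2$ and $\overline{2}(x^2)$ remain in $\mathcal{A}_i$: by \Cref{prop:disg} the operation $\uplus$ of two elements of one component is computed by that component's own $\oplus$ when it is bounded, and equals $1$ when it is unbounded, while $*$ is likewise internal. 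Hence the instance of (cha) at such an $x$ is exactly the equation $(\overline{2}x)^2=\overline{2}(x^2)$ read inside $\mathcal{A}_i$, i.e.\ the defining identity of pseudo-perfect Wajsberg hoops (\Cref{def:pseudoperf}). Since (cha) holds at every element of $\mathcal{A}$, each $\mathcal{A}_i$ is a totally ordered pseudo-perfect Wajsberg hoop. To pin down the bottom component, I note that $\mathcal{A}_0$ is bounded (it is an MV-chain) and pseudo-perfect, so \Cref{teo:chainpswh} leaves only two options, a totally ordered cancellative hoop or the $0$-free reduct of a perfect MV-chain; cancellative hoops are unbounded, so $\mathcal{A}_0$ is a perfect MV-chain, giving the claimed normal form.

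The second assertion is the converse reading of the same factorization. An ordinal sum of perfect MV-chains is a BL-chain (ordinal sums of MV-chains are BL-chains, \cite{am}), so it only remains to verify (cha). Each summand is (the $0$-free reduct of) a perfect MV-chain, hence a bounded member of $\mathbf{V}(C)$, and by \Cref{teo:chaeq} it satisfies $(2x)^2=2(x^2)$; since the summand is bounded, \Cref{prop:disg} rewrites $\overline{2}$ as $\oplus$ inside it, so the instance of (cha) at any element again collapses to this valid MV-identity. Thus (cha) holds at every element and the ordinal sum is a BL$_\text{Chang}$-chain.

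The step I expect to be the most delicate is the bookkeeping of which bottom element each $\oplus$ refers to. Inside a non-initial bounded component $\mathcal{A}_i$ the pertinent negation is the component's local $\sim$, taken with respect to $\mathcal{A}_i$'s own minimum rather than the global $0$ of $\mathcal{A}$; I must make sure that \Cref{prop:disg} is being applied with this local structure, so that the collapse of (cha) to the component's internal pseudo-perfect (resp.\ $\mathbf{V}(C)$) equation is genuine and not an artifact of confusing the two bottoms.
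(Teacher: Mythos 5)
Your proposal is correct and follows essentially the same route as the paper: decompose the chain via \Cref{teo:am}, use \Cref{prop:disg} to observe that the one-variable equation $(\overline{2}x)^2=\overline{2}(x^2)$ factors component-wise, and then identify the components via \Cref{teo:chaeq}/\Cref{teo:chainpswh} and \Cref{def:pseudoperf}. Your explicit attention to the local versus global bottom element is a welcome clarification of a point the paper leaves implicit in \Cref{prop:disg}.
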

\begin{proof}
Thanks to \Cref{teo:perfcha,teo:chaeq}, \Cref{rem:2} and \Cref{def:pseudoperf}, we have that every MV-chain (Wajsberg hoop) satisfying the equation $(\overline{2}x)^2=\overline{2}(x^2)$ is perfect (pseudo-perfect): using these facts and \Cref{prop:disg} we have that a BL-chain satisfies the equation $(\overline{2}x)^2=\overline{2}(x^2)$ iff it holds true in all the components of its ordinal sum. From these facts and \Cref{teo:am} we get the result.
\end{proof}
As a consequence, we obtain the following corollaries.
\begin{corollary}\label{cor:blchacont}
The variety of BL$_\text{Chang}$-algebras contains the ones of \\product-algebras and G\"{o}del-algebras: however it does not contains the variety of MV-algebras.
\end{corollary}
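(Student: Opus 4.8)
The plan is to verify the three assertions separately, using the structural description of BL$_\text{Chang}$-chains from \Cref{teo:chainstruct} as the main tool. Since a variety of BL-algebras is generated by its chains (BL having the subdirect-representation property into chains), it suffices in each case to reason about the totally ordered members. Recall that by \Cref{teo:chainstruct}, a BL-chain is a BL$_\text{Chang}$-chain precisely when its ordinal-sum components are either perfect MV-chains (in the first position) or totally ordered pseudo-perfect Wajsberg hoops (elsewhere).

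First I would handle the containment of product-algebras and G\"odel-algebras. For product-algebras, the key observation is that every product-chain is (isomorphic to) an ordinal sum whose components, apart from the trivial bottom, are cancellative hoops; and by \Cref{cor:disjcanc} every cancellative hoop satisfies $x\uplus y=1$, hence trivially $(\overline{2}x)^2=\overline{2}(x^2)=1$. The one subtlety is the bottom component: a product-chain has a two-element Boolean bottom MV-chain, which is perfect, so the whole ordinal sum meets the criterion of \Cref{teo:chainstruct}. For G\"odel-algebras, every G\"odel-chain is an ordinal sum of copies of the two-element Boolean algebra, equivalently of trivial (one-element) hoops above a Boolean bottom; each such component again satisfies the equation vacuously. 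Thus every product-chain and every G\"odel-chain is a BL$_\text{Chang}$-chain, and passing to the generated varieties gives the two inclusions.

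For the negative part — that the variety of MV-algebras is not contained in the variety of BL$_\text{Chang}$-algebras — I would exhibit a single MV-algebra that fails the equation $(\overline{2}x)^2=\overline{2}(x^2)$. The natural witness is the standard MV-algebra $[0,1]_\text{\L}$. By \Cref{lem:disgeq}, over any MV-algebra $\uplus$ coincides with $\oplus$, so on $[0,1]_\text{\L}$ the BL$_\text{Chang}$ equation reduces to the \L$_\text{Chang}$ equation $(2x)^2=2(x^2)$; and by \Cref{teo:chaeq} this fails in $[0,1]_\text{\L}$ since $[0,1]_\text{\L}\notin\mathbf{V}(C)$. Concretely one picks a suitable value of $x$ (for instance $x=\tfrac{1}{2}$, giving $(2x)^2=1$ but $2(x^2)=0$) and checks the inequality by direct computation. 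Since $[0,1]_\text{\L}$ is an MV-algebra that is not a BL$_\text{Chang}$-algebra, the variety of MV-algebras is not contained in that of BL$_\text{Chang}$-algebras.

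The main obstacle, such as it is, lies in the positive inclusions rather than the negative separation: one must be careful that the standard decomposition theorems present product-chains and G\"odel-chains as ordinal sums with the correct bottom component so that \Cref{teo:chainstruct} genuinely applies, and that $\uplus$ behaves on those components as \Cref{prop:disg} predicts. Once the component analysis is set up correctly, each verification collapses to the fact that cancellative hoops and trivial hoops satisfy the defining equation trivially (via \Cref{cor:disjcanc}), and the corollary follows.
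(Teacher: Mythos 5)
Your proposal is correct and follows essentially the same route as the paper, which simply observes via \Cref{teo:chainstruct} that $[0,1]_\Pi$ and $[0,1]_G$ are BL$_\text{Chang}$-chains while $[0,1]_\text{\L}$ is not. One small slip worth noting: the non-bottom components of a G\"odel chain are copies of the two-element Boolean algebra (not one-element hoops), and these are bounded, so \Cref{cor:disjcanc} does not apply to them; the equation $(\overline{2}x)^2=\overline{2}(x^2)$ holds in them because $\mathbf{2}$ is a perfect MV-chain (or by a direct check on idempotents), not vacuously via cancellativity.
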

\begin{proof}
From the previous theorem it is easy to see that the variety of BL$_\text{Chang}$-algebras contains $[0,1]_\Pi$ and $[0,1]_G$, but not $[0,1]_\text{\L}$.
\end{proof}
\begin{corollary}
Every finite BL$_\text{Chang}$-chain is an ordinal sum of a finite number of copies of the two elements boolean algebra. Hence the class of finite BL$_\text{Chang}$-chains coincides with the one of finite G\"{o}del chains.
\end{corollary}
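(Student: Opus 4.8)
The plan is to read off the finite chains directly from the structural theorem \Cref{teo:chainstruct}, after determining which of the admissible summands can be finite. First I would apply \Cref{teo:chainstruct}: a finite BL$_\text{Chang}$-chain $\mathcal{A}$ is an ordinal sum whose first component is a perfect MV-chain and whose remaining components are totally ordered pseudo-perfect Wajsberg hoops. Since $\mathcal{A}$ is finite, every summand is finite, so it suffices to list the finite summands of each type.

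The heart of the argument is that finiteness collapses every summand to (at most) two elements. In any totally ordered cancellative hoop an element $a<1$ generates a strictly decreasing chain $1>a>a^2>a^3>\cdots$ (using cancellativity to rule out $a^2=a$), so a \emph{finite} totally ordered cancellative hoop must be the trivial one-element hoop. By \Cref{teo:perfrot} a perfect MV-chain is the disconnected rotation of a cancellative hoop; hence a finite perfect MV-chain is the disconnected rotation of the trivial hoop, which is exactly the two-element Boolean algebra $\mathbf{2}$. In the same way, by \Cref{teo:chainpswh} a finite totally ordered pseudo-perfect Wajsberg hoop is either a finite totally ordered cancellative hoop --- hence trivial, and so contributing nothing to the ordinal sum --- or the $0$-free reduct of a finite perfect MV-chain, hence the $0$-free reduct of $\mathbf{2}$.

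Discarding the trivial summands, this already exhibits $\mathcal{A}$ as an ordinal sum of finitely many copies of $\mathbf{2}$, which is the first assertion. To recognize this as a G\"{o}del chain I would observe that in each two-element component the monoid product is $\min$ and the residuum is the G\"{o}del implication, and that the cross-component clauses of the ordinal sum for $*$ and $\Rightarrow$ reproduce exactly the G\"{o}del operations on the whole chain; thus every finite BL$_\text{Chang}$-chain is a finite G\"{o}del chain.

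For the reverse inclusion I would use that $\mathbf{2}$ is itself a perfect MV-chain, so by the second part of \Cref{teo:chainstruct} any ordinal sum of copies of $\mathbf{2}$ is a BL$_\text{Chang}$-chain. Since a finite G\"{o}del chain with $n+1$ elements is precisely the ordinal sum of $n$ copies of $\mathbf{2}$, every finite G\"{o}del chain is a finite BL$_\text{Chang}$-chain, yielding the claimed coincidence of the two classes. The only genuinely non-routine point is the collapse established in the second paragraph --- that finiteness forces every cancellative-hoop summand to reduce to a point and every perfect-MV summand to be two-valued; the remaining verifications are routine bookkeeping with the ordinal-sum operations.
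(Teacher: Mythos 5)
Your proof is correct and follows the route the paper intends (the corollary is stated there without an explicit proof, as a direct consequence of \Cref{teo:chainstruct}): decompose via \Cref{teo:chainstruct}, note that a finite totally ordered cancellative hoop is trivial and hence by \Cref{teo:perfrot} and \Cref{teo:chainpswh} every finite summand collapses to $\mathbf{2}$, and use that $\mathbf{2}$ is a perfect MV-chain for the converse. All the details you fill in, including the strictly decreasing chain of powers in a nontrivial cancellative hoop, check out.
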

For this reason it is immediate to see that the finite model property does not hold for BL$_\text{Chang}$.

We conclude with the following remark.
\begin{remark}\label{rem:p0}
\begin{itemize}
\item One can ask if it is possible to axiomatize the class BL$_\text{perf}$ of BL-algebras, whose chains are the BL-algebras that are ordinal sum of perfect MV-chains: the answer, however, is negative. In fact, the class of bounded Wajsberg hoops does not form a variety: for example, it is easy to check that for every bounded pseudo-perfect Wajsberg hoop $\mathcal{A}$, its subalgebra  $\mathcal{A}^+$ (see \Cref{def:bound} ) forms a cancellative hoop. Hence BL$_\text{perf}$ cannot be a variety.

However, as we will see in \Cref{sec:propcompl}, the variety of BL$_\text{Chang}$-algebras is the ``best approximation'' of BL$_\text{perf}$, in the sense that it is the smallest variety to contain BL$_\text{perf}$.
\item In \cite{dnl4} (see also \cite{ct}) it is studied the variety, called $P_0$, generated by all the perfect BL-algebras (a BL-algebra $\mathcal{A}$ is perfect if, by calling $MV(\mathcal{A})$ the biggest subalgebra of $\mathcal{A}$ to be an MV-algebra, then $MV(\mathcal{A})$ is a perfect MV-algebra). $P_0$ is axiomatized with the equation
    \begin{equation}
    \tag*{($p_0$)}\sim((\sim(x^2))^2)=(\sim ((\sim x)^2))^2.\label{eq:p0}
    \end{equation}
    One can ask which is the relation between $P_0$ and the variety of BL$_\text{Chang}$-algebras. The answer is that the variety of BL$_\text{Chang}$-algebras is strictly contained in $P_0$. In fact, an easy check shows that a BL-chain is perfect if and only if the first component of its ordinal sum is a perfect MV-chain. Hence we have:
    \begin{itemize}
    \item Every BL$_\text{Chang}$-chain is a perfect BL-chain.
    \item There are perfect BL-chains that are not BL$_\text{Chang}$-chains: an example is given by $C\oplus [0,1]_\text{\L}$.
    \end{itemize}
    Now, since the variety of BL$_\text{Chang}$-algebras is generated by its chains (like any variety of BL-algebras, see \cite{haj}), then we get the result.

Finally note that \ref{eq:p0} is equivalent to $2(x^2)=(2x)^2$: hence, differently to what happens over {\L} (see \Cref{rem:2} ), the equations $2(x^2)=(2x)^2$ and $\overline{2}(x^2)=(\overline{2}x)^2$ are not equivalent, over BL. 
\end{itemize}
\end{remark}
\subsection{Subdirectly irreducible and simple algebras}
We begin with a general result about Wajsberg hoops.
\begin{theorem}[{\cite[Corollary 3.11]{f}}]
Every subdirectly irreducible Wajsberg hoop is totally ordered.
\end{theorem}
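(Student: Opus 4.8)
The plan is to exploit the standard correspondence between congruences and filters on a hoop, together with the prelinearity that the Wajsberg identity forces. First I would recall that the congruences of a hoop $\mathcal{A}$ are in lattice-isomorphic correspondence with its filters (upward-closed subsets containing $1$ and closed under $*$): a filter $F$ induces the congruence $x\,\theta_F\,y$ iff $x\Rightarrow y,\ y\Rightarrow x\in F$, this assignment is an isomorphism of the filter and congruence lattices, and the filter generated by a single element $a$ is $\langle a\rangle=\{z:\ z\ge a^n\text{ for some }n\in\mathbb{N}\}$. In particular $\theta_{F\cap G}=\theta_F\cap\theta_G$, and $\langle a\rangle=\{1\}$ exactly when $a=1$, so $\theta_{\langle a\rangle}$ is nontrivial whenever $a\ne 1$.

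Next I would record two facts specific to Wajsberg hoops. The Wajsberg identity says exactly that $(x\Rightarrow y)\Rightarrow y=(y\Rightarrow x)\Rightarrow x$, and this common value is the join $x\sqcup y$; hence the induced order is a distributive lattice and, crucially, \emph{prelinearity} holds: $(x\Rightarrow y)\sqcup(y\Rightarrow x)=1$. This can be checked directly, or deduced from the quoted fact that every Wajsberg hoop is a $0$-free subreduct of an MV-algebra, where prelinearity is a theorem. I would also use that $*$ distributes over existing joins, $x*(y\sqcup z)=(x*y)\sqcup(x*z)$, which holds in any residuated structure.

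Then comes the main argument, by contradiction. Suppose a subdirectly irreducible Wajsberg hoop $\mathcal{A}$ has incomparable elements $a,b$; set $u=a\Rightarrow b$ and $v=b\Rightarrow a$, so $u\ne 1$ and $v\ne 1$, whence $\theta_{\langle u\rangle}$ and $\theta_{\langle v\rangle}$ are both nontrivial. The heart of the proof is to show $\langle u\rangle\cap\langle v\rangle=\{1\}$. If $z$ lies in both filters then $z\ge u^n$ and $z\ge v^m$ for some $n,m$, so $z\ge u^n\sqcup v^m$, and I would prove $u^n\sqcup v^m=1$ by expanding $1=(u\sqcup v)^{\,n+m-1}$ via distributivity and commutativity into a join of the monomials $u^i*v^j$ with $i+j=n+m-1$: when $i\ge n$ integrality gives $u^i*v^j\le u^i\le u^n$, and otherwise $i\le n-1$ forces $j\ge m$, so $u^i*v^j\le v^j\le v^m$. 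Hence every monomial is below $u^n\sqcup v^m$, so $1=(u\sqcup v)^{\,n+m-1}\le u^n\sqcup v^m\le 1$ and thus $z=1$.

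Finally, from $\langle u\rangle\cap\langle v\rangle=\{1\}$ and the lattice isomorphism I get $\theta_{\langle u\rangle}\cap\theta_{\langle v\rangle}=\theta_{\{1\}}=\Delta$, i.e.\ two nontrivial congruences meeting in the identity; this contradicts subdirect irreducibility, since the monolith is contained in every nontrivial congruence and hence in their intersection. Therefore no incomparable pair exists and $\mathcal{A}$ is totally ordered. I expect the main obstacle to be precisely the prelinearity step together with the ``power lemma'' $u^n\sqcup v^m=1$: propagating incomparability up to $1$ under the monoid product is what converts the purely order-theoretic hypothesis into the congruence-theoretic contradiction, and it is the place where the Wajsberg identity is genuinely used.
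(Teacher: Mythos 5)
The paper does not prove this statement at all: it is imported verbatim from Ferreirim (the cited Corollary 3.11), so there is no internal proof to compare against. Your argument is correct and self-contained, and it is essentially the standard one for results of this kind: the filter--congruence lattice isomorphism for hoops, prelinearity (which for Wajsberg hoops is legitimately obtained from the fact, also used elsewhere in the paper, that they are the $0$-free subreducts of MV-algebras, where $x\sqcup y=(x\Rightarrow y)\Rightarrow y$ and $(x\Rightarrow y)\sqcup(y\Rightarrow x)=1$ are identities), distributivity of $*$ over existing joins, and the power lemma $u^n\sqcup v^m=1$ derived from $u\sqcup v=1$. All the individual steps check out: $u,v\neq 1$ because incomparability of $a,b$ means $a\Rightarrow b\neq 1\neq b\Rightarrow a$; the monomial estimate ($i\ge n$ gives $u^iv^j\le u^n$, otherwise $j\ge m$ gives $u^iv^j\le v^m$) is sound; and two nontrivial congruences meeting in $\Delta$ do contradict the existence of a monolith. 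Note that your argument never uses more than the existence of two incomparable elements, so it actually proves the slightly stronger statement that every \emph{finitely} subdirectly irreducible Wajsberg hoop is totally ordered, which is the form in which such results are usually stated.
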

As a consequence, we have:
\begin{corollary}\label{cor:pssubir}
Every subdirectly irreducible pseudo-perfect Wajsberg hoop is totally ordered.
\end{corollary}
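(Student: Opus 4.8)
The plan is to reduce this corollary directly to the cited result of Ferreirim, \cite[Corollary 3.11]{f}, which asserts that every subdirectly irreducible Wajsberg hoop is totally ordered. First I would recall, from \Cref{def:pseudoperf}, that a pseudo-perfect Wajsberg hoop is by definition nothing more than a Wajsberg hoop satisfying the extra equation $(\overline{2}x)^2=\overline{2}(x^2)$; in particular, $ps\mathbb{WH}$ is a subvariety of $\mathbb{WH}$, so that every pseudo-perfect Wajsberg hoop is, in particular, a (plain) Wajsberg hoop.

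The key point is that subdirect irreducibility is an intrinsic property of an algebra: an algebra $\mathcal{A}$ is subdirectly irreducible precisely when its congruence lattice $\mathrm{Con}(\mathcal{A})$ possesses a least nontrivial element (a monolith). This condition refers only to $\mathrm{Con}(\mathcal{A})$, and hence depends only on $\mathcal{A}$ as an algebra, not on the variety in which one chooses to view it. Consequently, if $\mathcal{A}$ is a subdirectly irreducible pseudo-perfect Wajsberg hoop, then, regarding $\mathcal{A}$ simply as a Wajsberg hoop, it remains subdirectly irreducible.

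It then suffices to invoke \cite[Corollary 3.11]{f}: being a subdirectly irreducible Wajsberg hoop, $\mathcal{A}$ is totally ordered, which is exactly the assertion of the corollary. There is essentially no obstacle here; the only point that deserves an explicit word is the remark above, namely that subdirect irreducibility does not depend on the ambient variety, so that the passage from $ps\mathbb{WH}$ to $\mathbb{WH}$ before applying the theorem is legitimate.
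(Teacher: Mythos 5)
Your proposal is correct and follows exactly the paper's (implicit) argument: the paper presents this as an immediate consequence of \cite[Corollary 3.11]{f}, precisely because a pseudo-perfect Wajsberg hoop is a Wajsberg hoop and subdirect irreducibility is an intrinsic property of the algebra. Your explicit remark that subdirect irreducibility does not depend on the ambient variety is a welcome clarification, but nothing beyond the paper's one-line deduction.
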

We now move to simple algebras.

It is shown in \cite[Theorem 1]{tur} that the simple BL-algebras coincide with the simple MV-algebras, that is, with the subalgebras of
$[0,1]_\text{\L}$ (see \cite[Theorem 3.5.1]{mun}). Therefore we have:
\begin{theorem}
The only simple BL$_\text{Chang}$-algebra is the two elements boolean algebra $\mathbf{2}$.
\end{theorem}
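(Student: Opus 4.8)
The plan is to combine the cited characterization of simple BL-algebras with the structural description of BL$_\text{Chang}$-chains obtained in \Cref{teo:chainstruct}. First I would observe that every simple algebra is subdirectly irreducible, and since any variety of BL-algebras is generated by its chains, a simple algebra that is totally ordered suffices to analyze; indeed simplicity forces the algebra to be a chain here, because the cited result from \cite{tur} together with \cite[Theorem 3.5.1]{mun} tells us that the simple BL-algebras are exactly the subalgebras of $[0,1]_\text{\L}$, which are all MV-chains. So the whole problem reduces to: which subalgebras of $[0,1]_\text{\L}$ are BL$_\text{Chang}$-algebras?

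The key step is then to intersect the class of simple BL-algebras with the variety of BL$_\text{Chang}$-algebras. A simple BL-algebra $\mathcal{S}$ is (isomorphic to) a subalgebra of $[0,1]_\text{\L}$, hence an MV-chain. If $\mathcal{S}$ is also a BL$_\text{Chang}$-algebra, then being an MV-chain it must, by \Cref{teo:chainstruct} (whose proof shows that an MV-chain satisfying $(\overline{2}x)^2=\overline{2}(x^2)$ is perfect), be a perfect MV-chain. Therefore I would argue that $\mathcal{S}$ is simultaneously a simple MV-algebra and a perfect MV-algebra.

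The heart of the argument is showing that the only MV-algebra that is both simple and perfect is $\mathbf{2}$. Here I would invoke the defining property of perfect MV-algebras: in a perfect MV-algebra every element $x$ satisfies $ord(x)<\infty$ iff $ord(\sim x)=\infty$, so the algebra splits into an ``infinitesimal'' part (elements of finite order) and its complement. A nontrivial perfect MV-chain thus contains an element $x\neq 0,1$ with $ord(x)=\infty$, generating a proper nontrivial ideal (the infinitesimals), which contradicts simplicity. Concretely, a simple MV-algebra is a subalgebra of $[0,1]_\text{\L}$ and hence Archimedean, so it can have no nonzero infinitesimals; but a perfect MV-algebra with more than two elements is built precisely from infinitesimals (it is the disconnected rotation of a nontrivial cancellative hoop by \Cref{teo:perfrot}). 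The only way to reconcile these is that the cancellative hoop is trivial, forcing $\mathcal{S}\cong\mathbf{2}$.

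The main obstacle is making the ``simple and perfect implies $\mathbf{2}$'' step clean. I expect the cleanest route is the Archimedean observation: simplicity of an MV-algebra is equivalent to being a subalgebra of $[0,1]_\text{\L}$, and such subalgebras have no nonzero infinitesimal elements, whereas any perfect MV-chain larger than $\mathbf{2}$ necessarily has infinitesimals (every element of finite order below $1$ is infinitesimal). Since $\mathbf{2}=\{0,1\}$ is trivially both simple and perfect, it is the unique such algebra, and because $\mathbf{2}$ is indeed a BL$_\text{Chang}$-algebra (it is a one-element ordinal sum satisfying the relevant equation), the conclusion follows.
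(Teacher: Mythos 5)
Your proposal is correct and follows essentially the same route as the paper, which simply cites that the simple BL-algebras are exactly the subalgebras of $[0,1]_\text{\L}$ and concludes at once. You merely make explicit the step the paper leaves implicit --- that among these Archimedean MV-chains only $\mathbf{2}$ can satisfy the BL$_\text{Chang}$ equation, since a nontrivial simple MV-chain has no infinitesimals and hence cannot be perfect --- and that elaboration is sound.
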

An easy consequence of this fact is that the only simple {\L}$_\text{Chang}$-algebra is $\mathbf{2}$.
\subsection{Completeness}\label{sec:propcompl}
We begin with a result about pseudo-perfect Wajsberg hoops.
\begin{theorem}
The class $pMV$ of $0$-free reducts of perfect MV-chains generates $ps\mathbb{WH}$.
\end{theorem}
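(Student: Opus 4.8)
The plan is to prove the two inclusions $\mathbf{V}(pMV)\subseteq ps\mathbb{WH}$ and $ps\mathbb{WH}\subseteq\mathbf{V}(pMV)$ separately, the first being routine and the second being the substantive part. For the first inclusion, note that every $0$-free reduct of a perfect MV-chain lies in $ps\mathbb{WH}$: a perfect MV-chain belongs to $\mathbf{V}(C)$, hence satisfies $(2x)^2=2(x^2)$ by \Cref{teo:chaeq}, and by \Cref{lem:disgeq} (together with \Cref{rem:2}) this is, on MV-algebras, exactly the defining equation $(\overline{2}x)^2=\overline{2}(x^2)$ of \Cref{def:pseudoperf}; so each member of $pMV$ is a pseudo-perfect Wajsberg hoop and $\mathbf{V}(pMV)\subseteq ps\mathbb{WH}$ follows since $ps\mathbb{WH}$ is a variety.

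For the reverse inclusion I would use the standard subdirect representation: every member of the variety $ps\mathbb{WH}$ is a subdirect product of subdirectly irreducible members of $ps\mathbb{WH}$, so it suffices to show that every such subdirectly irreducible algebra lies in $\mathbf{V}(pMV)$. By \Cref{cor:pssubir} every subdirectly irreducible pseudo-perfect Wajsberg hoop is totally ordered, which reduces the problem to showing that every \emph{totally ordered} pseudo-perfect Wajsberg hoop belongs to $\mathbf{V}(pMV)$. Here I invoke the structure result \Cref{teo:chainpswh}, which splits the totally ordered case into exactly two subcases: either the algebra is the $0$-free reduct of a perfect MV-chain, or it is a totally ordered cancellative hoop. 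The first subcase is immediate, since such an algebra is by definition a member of $pMV\subseteq\mathbf{V}(pMV)$.

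The crux is the cancellative subcase, and this is where I expect the main (though still short) work to lie. Given a totally ordered cancellative hoop $\mathcal{A}$, I would form its disconnected rotation $\mathcal{A}^*$, which by \Cref{teo:perfrot} is a perfect MV-algebra; since the order on $A^*=A\cup A'$ is total (all of $A'$ below all of $A$, with $A$ carrying the order of $\mathcal{A}$), $\mathcal{A}^*$ is a perfect MV-chain, so its $0$-free reduct is a member of $pMV$. The key observation is that $\mathcal{A}$ sits inside this reduct as a subalgebra: inspecting the rotation construction, for $a,b\in A$ one has $a*_{\mathcal{A}^*}b=a*_{\mathcal{A}}b$ and $a\Rightarrow_{\mathcal{A}^*}b=a\Rightarrow_{\mathcal{A}}b$, and $1\in A$, so $A$ is closed under the hoop operations and $\mathcal{A}$ is a $0$-free subreduct of $\mathcal{A}^*$. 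Hence $\mathcal{A}\in\mathbb{S}(pMV)\subseteq\mathbf{V}(pMV)$. Combining the two subcases, every subdirectly irreducible member of $ps\mathbb{WH}$ lies in $\mathbf{V}(pMV)$, whence $ps\mathbb{WH}\subseteq\mathbf{V}(pMV)$; together with the first inclusion this yields $\mathbf{V}(pMV)=ps\mathbb{WH}$, as claimed.
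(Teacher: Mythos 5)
Your proposal is correct and follows essentially the same route as the paper: reduce to subdirectly irreducible (hence totally ordered) members via \Cref{cor:pssubir}, apply the dichotomy of \Cref{teo:chainpswh}, and handle the cancellative case by embedding into the $0$-free reduct of the disconnected rotation via \Cref{teo:perfrot}. The paper merely compresses the steps you spell out (including the easy inclusion $\mathbf{V}(pMV)\subseteq ps\mathbb{WH}$) into an ``easy to check'' remark.
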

\begin{proof}
From \Cref{teo:perfrot,teo:chainpswh} it is easy to check that the variety generated by $pMV$ contains all the totally ordered pseudo-perfect Wajsberg hoops.

From these facts and \Cref{cor:pssubir}, we have that $pMV$ must be generic for $ps\mathbb{WH}$.
\end{proof}
\begin{theorem}[\cite{dist}]\label{teo:dist}
Let L be an axiomatic extension of BL, then L enjoys the finite strong completeness w.r.t a class $K$ of L-algebras iff every countable L-chain is partially embeddable into $K$.
\end{theorem}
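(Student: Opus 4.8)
The plan is to prove both implications, treating soundness (that $T\vdash_L\varphi$ implies $T\models_K\varphi$) as automatic, since every member of $K$ is an $L$-algebra and $\vdash_L$ is sound with respect to all $L$-algebras, and concentrating on the completeness content. Throughout I would read $T\models_K\varphi$ in terms of evaluations: it fails exactly when there are $\mathcal{A}\in K$ and an $\mathcal{A}$-evaluation $e$ with $e[T]=\{1\}$ and $e(\varphi)\neq 1$. The one external fact I would import is that every axiomatic extension of BL is complete, indeed strongly complete, with respect to its countable chains; this follows from semilinearity (each such variety is generated by its totally ordered members) together with the observation that, the language being countable, the subalgebra of a chain generated by the range of a single evaluation is again a countable chain.

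For the direction \emph{partial embeddability $\Rightarrow$ finite strong completeness} I would argue contrapositively. Given finite $T$ with $T\not\vdash_L\varphi$, the imported completeness fact yields a countable $L$-chain $\mathcal{C}$ and an evaluation $v$ with $v[T]=\{1\}$ and $v(\varphi)\neq 1$. Collecting the $v$-values of all subformulas of $T\cup\{\varphi\}$, together with $0$ and $1$, gives a finite subset of $\mathcal{C}$ which, with the restricted operations, is a finite partial subalgebra $\mathcal{P}$ of $\mathcal{C}$; by construction every subformula value lies again in $\mathcal{P}$, so the relevant operations are defined there and agree with those of $\mathcal{C}$. By hypothesis $\mathcal{P}$ embeds into some $\mathcal{D}\in K$ via an embedding $f$. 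Setting $w(p)\df f(v(p))$ on the variables of $T\cup\{\varphi\}$, a routine induction on formula complexity shows $w(\psi)=f(v(\psi))$ for every subformula $\psi$, the compound step using precisely that $f$ preserves the operations defined in $\mathcal{P}$. Since $f$ is an injective homomorphism preserving $1$, we get $w[T]=\{1\}$ while $w(\varphi)=f(v(\varphi))\neq 1$, so $T\not\models_K\varphi$.

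For the converse, \emph{finite strong completeness $\Rightarrow$ partial embeddability}, I would again argue contrapositively: suppose some finite partial subalgebra $\mathcal{P}=\{a_1,\dots,a_n\}$ of a countable $L$-chain $\mathcal{C}$ embeds into no member of $K$, and relabel so that $a_1<\dots<a_n$. I would introduce fresh variables $p_1,\dots,p_n$ and let $T_\mathcal{P}$ be the finite theory consisting of the biconditionals $p_i\leftrightarrow(p_j\circ p_k)$ for every operation $\circ\in\{\&,\to,\sqcap,\sqcup\}$ defined in $\mathcal{P}$ with $a_j\circ a_k=a_i$, the formulas $p_i\leftrightarrow\bot$ and $p_i\leftrightarrow\top$ when $a_i$ is $0$ or $1$, and the weak-order formulas $p_i\to p_{i+1}$; and I would take the conclusion $\varphi_\mathcal{P}\df\bigsqcup_{i=1}^{n-1}(p_{i+1}\to p_i)$ using the definable join. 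The evaluation $a_i\mapsto a_i$ satisfies $T_\mathcal{P}$ in $\mathcal{C}$, and since each $a_{i+1}\to a_i<1$ in the chain we get $\varphi_\mathcal{P}$ evaluated strictly below $1$, so $T_\mathcal{P}\not\vdash_L\varphi_\mathcal{P}$ by soundness. On the other hand, any $\mathcal{A}\in K$ carrying an evaluation $e$ satisfying $T_\mathcal{P}$ yields an operation- and order-preserving map $h\colon a_i\mapsto e(p_i)$; were $h$ injective it would be an embedding of $\mathcal{P}$ into $\mathcal{A}$, contrary to assumption, so $h$ collapses some consecutive pair and $e(p_{i+1}\to p_i)=1$ for that $i$, forcing $e(\varphi_\mathcal{P})=1$. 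Hence $T_\mathcal{P}\models_K\varphi_\mathcal{P}$, and finite strong completeness fails.

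I expect the genuine difficulty to lie in this last direction, specifically in the faithful encoding of a partial subalgebra by a finite theory: one must capture distinctness of the $a_i$, which is not directly a ``value $1$'' condition, and I would circumvent this by exploiting that $\mathcal{C}$ is a chain, recording only the weak order in $T_\mathcal{P}$ and pushing the strict inequalities into the single conclusion $\varphi_\mathcal{P}$, so that non-injectivity of every candidate map is exactly what makes $\varphi_\mathcal{P}$ provable over $K$. The first direction, by contrast, is the routine transfer of a countable counter-model through a partial embedding.
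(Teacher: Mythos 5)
The paper offers no proof of this statement: it is imported wholesale from \cite{dist}, so there is nothing internal to compare against. Your reconstruction is correct and is, in substance, the argument of the cited source. The right-to-left half (partial embeddability implies finite strong completeness) correctly uses strong completeness of any axiomatic extension of BL with respect to its countable chains, restricts to the finite set of subformula values to obtain a finite partial subalgebra, and pushes the countermodel through the given embedding; finiteness of $T$ is used exactly where it must be. The left-to-right half is the genuinely delicate one and your ``diagram'' encoding is the standard resolution: record only the defined operations and the weak order in the finite theory $T_\mathcal{P}$, and concentrate all the strict inequalities into the single conclusion $\bigsqcup_{i}(p_{i+1}\to p_i)$, which is refutable in the chain $\mathcal{C}$ (a finite join of elements below $1$ stays below $1$ in a chain) but forced to $1$ in any member of $K$ precisely because every candidate map must collapse a consecutive pair. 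The only point worth flagging is that you read $T\models_K\varphi$ evaluation-wise (failing iff some single $\mathcal{A}$-evaluation models $T$ but not $\varphi$), whereas the preliminaries of this paper literally define $T\models_{\mathcal{A}}\varphi$ as the global condition ``if $\mathcal{A}\models T$ then $\mathcal{A}\models\varphi$''; the theorem as stated in \cite{dist} is about the evaluation-wise relation, and your argument (in particular the claim $T_\mathcal{P}\models_K\varphi_\mathcal{P}$) needs that reading, so your choice is the right one, but it silently corrects the paper's stated semantics rather than following it.
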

As shown in \cite{haj} product logic enjoys the finite strong completeness w.r.t $[0,1]_\Pi$ and hence every countable product chain is partially embeddable into $[0,1]_\Pi\simeq\mathbf{2}\oplus (0,1]_C$, with $(0,1]_C$ being the standard cancellative hoop (i.e. the $0$-free reduct of $[0,1]_\Pi\setminus\{0\}$). Since every totally ordered product chain is of the form $\mathbf{2}\oplus\mathcal{A}$, where $\mathcal{A}$ is a cancellative hoop (see \cite{eghm}), it follows that:
\begin{proposition}\label{prop:cancemb}
Every countable totally ordered cancellative hoop partially embeds into $(0,1]_C$.
\end{proposition}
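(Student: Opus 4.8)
The plan is to reduce the statement to the partial-embeddability consequence of the finite strong completeness of product logic, which has just been recalled. First I would take a countable totally ordered cancellative hoop $\mathcal{A}$ and form the ordinal sum $\mathbf{2}\oplus\mathcal{A}$. Since every totally ordered product chain is of the form $\mathbf{2}\oplus\mathcal{C}$ for a cancellative hoop $\mathcal{C}$, and conversely each such ordinal sum is a product chain, $\mathbf{2}\oplus\mathcal{A}$ is a countable product chain. By the finite strong completeness of product logic with respect to $[0,1]_\Pi$ together with \Cref{teo:dist}, $\mathbf{2}\oplus\mathcal{A}$ is partially embeddable into $[0,1]_\Pi\simeq\mathbf{2}\oplus(0,1]_C$.

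Next I would transfer a finite partial subhoop of $\mathcal{A}$ through this partial embedding. Given a finite partial subalgebra $F$ of $\mathcal{A}$ in the hoop language $\langle *,\Rightarrow,1\rangle$, I would regard $\{0,1\}\cup F$ as a finite partial subalgebra of $\mathbf{2}\oplus\mathcal{A}$ in the BL-language (here $0$ is the bottom coming from the first component $\mathbf{2}$). Partial embeddability then provides a BL-embedding $h$ of $\{0,1\}\cup F$ into $\mathbf{2}\oplus(0,1]_C$.

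The core of the argument is to check that the restriction $h|_F$ is the desired hoop embedding into $(0,1]_C$. Since $h$ preserves the order and fixes the bounds $0$ and $1$, every element of $F\setminus\{1\}$, which lies strictly between $0$ and $1$ in $\mathbf{2}\oplus\mathcal{A}$, is sent to an element strictly between $0$ and $1$ in $\mathbf{2}\oplus(0,1]_C$; these are precisely the elements of the upper component $(0,1]_C$ strictly below its top. Hence $h|_F$ takes values in $(0,1]_C$. Finally, within a single component of an ordinal sum the operations $*$ and $\Rightarrow$ restrict to that component's own hoop operations, so the hoop operations defined on $F$ agree with the BL operations on $\{0,1\}\cup F$, and likewise on the image side inside $(0,1]_C$; therefore $h|_F$ is injective and preserves $*$ and $\Rightarrow$ wherever they are defined on $F$, i.e.\ it is a hoop embedding.

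I expect the main obstacle to be exactly this last bookkeeping step: one must check that passing from the hoop language to the BL-language (by adjoining $0,1$ and the lattice operations) does not create spurious defined operations, and that the image of $F$ genuinely lands in the cancellative component rather than hitting the bottom $0$. Both points follow from the behaviour of ordinal sums recalled above, but they are the place where the reduction could break if the component structure were not respected.
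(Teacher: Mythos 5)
Your proposal is correct and follows exactly the route the paper intends: the proposition is stated there as an immediate consequence of the finite strong completeness of product logic w.r.t.\ $[0,1]_\Pi\simeq\mathbf{2}\oplus(0,1]_C$ (via \Cref{teo:dist}) together with the fact that every totally ordered product chain is $\mathbf{2}\oplus\mathcal{A}$ for a cancellative hoop $\mathcal{A}$. Your additional bookkeeping --- passing from a finite partial subhoop $F$ to $\{0,1\}\cup F$, and checking that the image of $F\setminus\{1\}$ lands strictly between the bounds and hence in the cancellative component --- is exactly the verification the paper leaves implicit, and it is sound.
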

\begin{theorem}\label{teo:perfho}
Every countable perfect MV-chain partially embeds into $\mathcal{V}=(0,1]^*_C$ (i.e. the disconnected rotation of $(0,1]_C$).
\end{theorem}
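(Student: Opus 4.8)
The plan is to reduce the statement to \Cref{prop:cancemb} by exploiting the representation of perfect MV-chains as disconnected rotations. By \Cref{teo:perfrot}, a countable perfect MV-chain $\mathcal{A}$ is isomorphic to $\mathcal{B}^*$, the disconnected rotation of a cancellative hoop $\mathcal{B}$; since $\mathcal{A}$ is a chain and $\mathcal{B}$ sits inside $\mathcal{B}^*$ as its positive part (inheriting the order), $\mathcal{B}$ is a countable totally ordered cancellative hoop. It therefore suffices to show that every finite partial subalgebra of $\mathcal{B}^*$ embeds into $(0,1]^*_C=\mathcal{V}$, knowing from \Cref{prop:cancemb} that every finite partial subalgebra of $\mathcal{B}$ embeds into $(0,1]_C$.

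First I would fix a finite partial subalgebra $S$ of $\mathcal{B}^*$ and extract its \emph{base}, the finite set $T\df\{1\}\cup\{b\in B:\, b\in S\}\cup\{c\in B:\, c'\in S\}\subseteq B$, viewed as a finite partial subalgebra of $\mathcal{B}$. The key observation is that $T$ already contains every cancellative-hoop value needed to compute the rotation operations that are defined on $S$. Indeed, inspecting the definition of the disconnected rotation, whenever $*_{\mathcal{B}^*}$ or $\Rightarrow_{\mathcal{B}^*}$ applied to elements of $S$ yields a result in $S$, that result is either a constant ($0$ or $1$), or a positive element of the form $a*_\mathcal{B} c$ or $a\Rightarrow_\mathcal{B} c$ lying in $S$, or the negative copy $(a*_\mathcal{B} c)'$ or $(a\Rightarrow_\mathcal{B} c)'$ of such an element; in each case the underlying value of $\mathcal{B}$ belongs to $T$.

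Next, by \Cref{prop:cancemb} I would take an embedding $\phi:T\to(0,1]_C$ of partial algebras and lift it to $\Phi:S\to\mathcal{V}$ by setting $\Phi(b)\df\phi(b)$ for $b\in B$ and $\Phi(c')\df(\phi(c))'$ for $c'\in B'$. Injectivity is immediate, since $\phi$ is injective and a positive element of $\mathcal{V}$ is never identified with a negative one. Preservation of the order, and hence of $\sqcap$ and $\sqcup$, follows because $\phi$ is order-preserving while both rotations place all negatives below all positives, with the order reversed among the negatives; preservation of $\sim$ is automatic, as $\Phi(\sim_{\mathcal{B}^*}b)=\sim_\mathcal{V}\Phi(b)$ directly from the definitions.

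The only real obstacle is verifying that $\Phi$ preserves $*$ and $\Rightarrow$ wherever they are defined on $S$. This is a case analysis following the four clauses in the definition of the disconnected rotation, according to whether each argument lies in the positive or the negative cone. In every clause the operation of $\mathcal{B}^*$ is expressed through a single application of $*_\mathcal{B}$ or $\Rightarrow_\mathcal{B}$ to elements of $T$ (composed with $\sim$ and constants), and by the observation above the relevant base value lies in $T$; since $\phi$ preserves that base operation and $\Phi$ is defined by the same formula interpreted in $\mathcal{V}$, the two sides coincide. Carrying out these four routine computations completes the argument.
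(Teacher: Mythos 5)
Your proposal is correct and follows exactly the route the paper takes: the paper's proof is simply ``Immediate from \Cref{prop:cancemb} and \Cref{teo:perfrot}'', and your argument is the detailed verification of that reduction (representing the perfect MV-chain as a disconnected rotation, embedding the cancellative-hoop base of a finite partial subalgebra into $(0,1]_C$, and lifting to $\mathcal{V}$). The case analysis you outline is precisely what the paper leaves implicit.
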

\begin{proof}
Immediate from \Cref{prop:cancemb} and \Cref{teo:perfrot}.
\end{proof}
\begin{corollary}\label{cor:comp}
The logic {\L}$_\text{Chang}$ is finitely strongly complete w.r.t. $\mathcal{V}$.
\end{corollary}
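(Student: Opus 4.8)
The plan is to apply the characterization of finite strong completeness supplied by \Cref{teo:dist}, which converts the completeness question into a purely algebraic partial-embeddability statement. By that theorem, \L$_\text{Chang}$ enjoys the finite strong completeness with respect to $\mathcal{V}$ exactly when every countable \L$_\text{Chang}$-chain is partially embeddable into $\mathcal{V}$. So the corollary will follow as soon as I identify the countable \L$_\text{Chang}$-chains and show that each of them partially embeds into $\mathcal{V}$.

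First I would pin down which chains are the \L$_\text{Chang}$-chains. By definition \L$_\text{Chang}$ is the logic associated to the variety $\mathbf{V}(C)$ generated by Chang's MV-algebra, so its algebras are precisely the members of $\mathbf{V}(C)$; equivalently, by \Cref{teo:chaeq}, the MV-algebras satisfying $(2x)^2=2(x^2)$. As already observed immediately after \Cref{teo:perfcha}, the chains of $\mathbf{V}(C)$ coincide with the perfect MV-chains. Hence a countable \L$_\text{Chang}$-chain is exactly a countable perfect MV-chain.

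Next I would simply invoke \Cref{teo:perfho}, which asserts that every countable perfect MV-chain partially embeds into $\mathcal{V}=(0,1]^*_C$. Combining this with the identification of the previous step, every countable \L$_\text{Chang}$-chain partially embeds into $\mathcal{V}$, so the criterion of \Cref{teo:dist} is satisfied and the finite strong completeness of \L$_\text{Chang}$ with respect to $\mathcal{V}$ follows.

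I do not expect a genuine obstacle: the corollary is a bookkeeping argument chaining together \Cref{teo:dist}, the chain description of $\mathbf{V}(C)$ from \Cref{teo:perfcha}, and the embedding result of \Cref{teo:perfho}. The only point deserving a moment of care is the first step, namely ensuring that the \L$_\text{Chang}$-chains \emph{coincide} with the perfect MV-chains and are not merely contained among them; but this is precisely what the chain characterization of $\mathbf{V}(C)$ guarantees, so the conclusion is immediate.
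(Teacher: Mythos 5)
Your proposal is correct and follows exactly the route the paper intends: the corollary is drawn from \Cref{teo:dist} together with \Cref{teo:perfho}, using the fact (noted after \Cref{teo:perfcha}) that the chains of $\mathbf{V}(C)$ are precisely the perfect MV-chains. The paper leaves this chain of reasoning implicit, and your write-up simply makes it explicit.
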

\begin{theorem}\label{teo:blccomp}
BL$_\text{Chang}$ enjoys the finite strong completeness w.r.t. $\omega\mathcal{V}$. As a consequence, the variety of BL$_\text{Chang}$-algebras is generated by the class of all ordinal sums of perfect MV-chains and hence is the smallest variety to contain this class of algebras.
\end{theorem}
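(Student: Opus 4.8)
The plan is to apply the partial-embedding criterion of \Cref{teo:dist}. Since $\omega\mathcal{V}$ is by definition the ordinal sum of $\omega$ copies of the perfect MV-chain $\mathcal{V}=(0,1]^*_C$, it is a BL$_\text{Chang}$-chain by \Cref{teo:chainstruct}; hence it makes sense to speak of completeness with respect to it, and the finite strong completeness of BL$_\text{Chang}$ with respect to $\omega\mathcal{V}$ will follow once I show that every countable BL$_\text{Chang}$-chain is partially embeddable into $\omega\mathcal{V}$.

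To that end I would take an arbitrary countable BL$_\text{Chang}$-chain $\mathcal{A}$ and decompose it by \Cref{teo:chainstruct} as $\mathcal{A}=\bigoplus_{i\in I}\mathcal{A}_i$, where $\mathcal{A}_0$ is a perfect MV-chain and each $\mathcal{A}_i$ with $i>0$ is a totally ordered pseudo-perfect Wajsberg hoop; by \Cref{teo:chainpswh} every such $\mathcal{A}_i$ is either a totally ordered cancellative hoop or the $0$-free reduct of a perfect MV-chain. Given a finite partial subalgebra $F$ of $\mathcal{A}$, it meets only finitely many components, say those indexed $j_0<\dots<j_m$. I would then embed, component by component and in the same order, the finite partial subalgebra $F\cap A_{j_\ell}$ into a separate copy $\mathcal{V}_\ell$ of $\mathcal{V}$: the first component into $\mathcal{V}_0$ via \Cref{teo:perfho}; each cancellative component into the top cancellative part $(0,1]_C$ of its copy $\mathcal{V}_\ell$ via \Cref{prop:cancemb}; and each perfect-MV-reduct component into its copy $\mathcal{V}_\ell$, again via \Cref{teo:perfho} read on $0$-free reducts. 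Because $F$ meets only finitely many components, the $\omega$ copies available inside $\omega\mathcal{V}$ are more than enough.

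The key point, and the step I expect to require the most care, is to argue that these order-respecting component-wise partial embeddings glue into a single partial embedding of $F$ into $\omega\mathcal{V}$. For operations between elements lying in the same component, preservation is exactly what the individual embeddings supply. For operations between elements of distinct components, both $\mathcal{A}$ and $\omega\mathcal{V}$ compute $*$ and $\Rightarrow$ through the same uniform defining clauses of the ordinal sum, so once the assembled map is monotone and sends the $j_\ell$-th component of $\mathcal{A}$ into the $\ell$-th copy of $\mathcal{V}$, the cross-component values match automatically. The only delicate checks concern the bottom $0$ and the negation $\sim$: one verifies that $0\in F$ is sent to $0_{\omega\mathcal{V}}$ (which is forced, since $0\in A_0$ maps into $\mathcal{V}_0$), and that for $x$ in any higher component $\sim x=0$ holds both in $\mathcal{A}$ and in $\omega\mathcal{V}$, so $\sim$ is preserved even for cancellative components sent into the upper half of their copy. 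This produces the required partial embedding, and \Cref{teo:dist} then yields the finite strong completeness.

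For the ``as a consequence'' clause I would observe that finite strong completeness entails plain completeness with respect to $\omega\mathcal{V}$, so every equation valid in $\omega\mathcal{V}$ is a BL$_\text{Chang}$-theorem and hence holds throughout the variety; as $\omega\mathcal{V}$ is itself a BL$_\text{Chang}$-algebra, this forces $\mathbf{V}(\omega\mathcal{V})$ to coincide with the full variety of BL$_\text{Chang}$-algebras. Since $\omega\mathcal{V}$ is an ordinal sum of perfect MV-chains, and conversely every ordinal sum of perfect MV-chains is a BL$_\text{Chang}$-chain by \Cref{teo:chainstruct} and thus lies in this variety, the class of all such ordinal sums generates precisely the variety of BL$_\text{Chang}$-algebras, which is therefore the smallest variety containing it.
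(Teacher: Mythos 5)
Your proposal is correct and follows essentially the same route as the paper: the paper likewise reduces the claim via \Cref{teo:dist} to showing that every countable BL$_\text{Chang}$-chain partially embeds into $\omega\mathcal{V}$, and obtains this from \Cref{teo:chainstruct}, \Cref{prop:cancemb} and \Cref{teo:perfho}. You merely spell out the component-wise gluing (and the derivation of the generation statement) that the paper leaves implicit, and those details are sound.
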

\begin{proof}
Thanks to \Cref{teo:dist} it is enough to show that every countable BL$_\text{Chang}$-chain partially embeds into $\omega\mathcal{V}$ (i.e. the ordinal sum of ``$\omega$ copies'' of $\mathcal{V}$). This fact, however, follows immediately from \Cref{prop:cancemb} and \Cref{teo:chainstruct,teo:perfho}.
\end{proof}
But we cannot obtain a stronger result: in fact
\begin{theorem}
BL$_\text{Chang}$ is not strongly complete w.r.t. $\omega\mathcal{V}$.
\end{theorem}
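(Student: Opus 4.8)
The plan is to refute strong completeness directly, by producing a theory $T$ and a formula $\varphi$ with $T\models_{\omega\mathcal{V}}\varphi$ but $T\not\vdash_{\text{BL}_\text{Chang}}\varphi$. Since every axiomatic extension of BL is complete with respect to the whole variety of its algebras, it is enough to exhibit one BL$_\text{Chang}$-chain $\mathcal{M}$ carrying an evaluation that satisfies $T$ while refuting $\varphi$; this will give $T\not\models_{\mathcal{M}}\varphi$, hence $T\not\vdash\varphi$, against strong completeness. The underlying idea is that the cancellative hoop $(0,1]_C$ sitting in each component of $\omega\mathcal{V}$ is archimedean, whereas by \Cref{teo:perfrot} the variety also contains perfect MV-chains built from \emph{non-archimedean} cancellative hoops. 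Concretely, with two variables $p,q$ I would take
$T=\{\neg\neg p\to p,\ \neg\neg q\to q,\ \neg(p\& p),\ \neg(q\& q)\}\cup\{\overline{n}p\to q:\ n\geq 1\}$ and $\varphi=\neg p$.

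First I would check $T\models_{\omega\mathcal{V}}\neg p$. Let $v$ satisfy $T$ in $\omega\mathcal{V}=\bigoplus_{i<\omega}\mathcal{V}_i$. A direct computation of $\sim\sim x$ in an ordinal sum shows $v(\neg\neg p\to p)=1$ forces $v(p)$ into the first component $\mathcal{V}_0$ (for an element of a higher component $\sim\sim x=1\neq x$), and the condition $v(\neg(p\& p))=1$, i.e.\ $v(p)^2=0$, both excludes $v(p)=1$ and places $v(p)$ in $\{0\}$ or in the infinitesimal (bottom) half of $\mathcal{V}_0$; the same applies to $q$. Writing $v(p)=c'$, $v(q)=d'$ in the disconnected rotation of $(0,1]_C$, and using $\overline{n}p=np$ inside a single bounded component (\Cref{lem:disgeq} and \Cref{prop:disg}), the constraints $\overline{n}p\to q$ read $c^{\,n}\geq d$ for all $n$. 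If $v(p)>0$ then $c<1$, so $c^{\,n}\to 0$ and eventually $c^{\,n}<d$, a contradiction; hence $v(p)=0$, i.e.\ $v(\neg p)=1$. Here it is essential that both $\neg\neg q\to q$ and $\neg(q\& q)$ keep $q$ an infinitesimal of $\mathcal{V}_0$: otherwise the bound $np\le q$ would hold trivially and $p$ could be positive.

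Next I would show $T\not\vdash\neg p$ by a refuting model in the variety. Let $\mathcal{C}$ be a totally ordered cancellative hoop that is \emph{not} archimedean, for instance the negative cone of $\mathbb{Z}\times\mathbb{Z}$ under the lexicographic order, where $c=(0,-1)$ and $d=(-1,0)$ satisfy $c,d<1$ and $c^{\,n}\geq d$ for every $n$. By \Cref{teo:perfrot} its disconnected rotation $\mathcal{M}=\mathcal{C}^*$ is a perfect MV-chain, hence a BL$_\text{Chang}$-chain by \Cref{teo:chainstruct}. Setting $v(p)=c'$, $v(q)=d'$ in $\mathcal{M}$: since $\mathcal{M}$ is an MV-algebra the formulas $\neg\neg p\to p$ and $\neg\neg q\to q$ are valid; $\neg(p\& p)$ and $\neg(q\& q)$ hold because $(c')^2=(d')^2=0$; and each $\overline{n}p\to q$ holds because $c^{\,n}\geq d$. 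Thus $v$ satisfies $T$, yet $v(\neg p)=\sim c'=c\neq 1$. Hence $T\not\models_{\mathcal{M}}\neg p$, so $T\not\vdash\neg p$, and strong completeness w.r.t.\ $\omega\mathcal{V}$ fails.

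I would flag as the main obstacle the fact that \emph{positivity} ($x\neq 0$) is not expressible by any single formula: term functions over an MV-chain are continuous, so no formula separates $0$ from the positive infinitesimals, and positivity can be neither asserted in $T$ nor forced finitely. The device that circumvents this is to take $\varphi=\neg p$ as the \emph{conclusion}: in $\omega\mathcal{V}$ the archimedean property itself forces $p=0$, while in the non-archimedean chain $\mathcal{M}$ a genuinely positive infinitesimal $p$ satisfies $T$, so positivity is supplied for free by the refuting model rather than encoded in the theory. The only genuinely delicate verifications are the ordinal-sum computation showing that $\neg\neg x\to x$ together with $\neg(x\& x)$ pins an element to the infinitesimal part of the first component, and the existence of the non-archimedean cancellative hoop $\mathcal{C}$; both are routine.
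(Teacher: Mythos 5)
Your proof is correct, but it takes a genuinely different route from the paper. The paper argues indirectly: by the embedding criterion of \cite{dist}, strong completeness w.r.t.\ $\omega\mathcal{V}$ would mean every countable BL$_\text{Chang}$-chain embeds into $\omega\mathcal{V}$, which would force every countable totally ordered cancellative hoop to embed into $(0,1]_C$, i.e.\ strong standard completeness of product logic --- a known failure. Your argument makes the same underlying obstruction (the archimedean property of $(0,1]_C$ versus the existence of non-archimedean cancellative hoops) completely explicit, by exhibiting a concrete theory $T$ and conclusion $\neg p$ separated by the disconnected rotation of the negative cone of $\mathbb{Z}\times_{\mathrm{lex}}\mathbb{Z}$. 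The computations check out: $\neg(p\&p)$ alone already pins $v(p)$ into the coinfinitesimal part $A'$ of the first component (so the $\neg\neg p\to p$ axioms are in fact redundant for the $\omega\mathcal{V}$ half, though harmless, and they do hold in your refuting MV-chain), $\overline{n}$ coincides with the MV-algebraic $n$-fold $\oplus$ there by \Cref{lem:disgeq} and \Cref{prop:disg}, and the order reversal on $A'$ turns $\overline{n}p\to q$ into $c^n\geq d$, which is archimedean-sensitive exactly as you need. What the paper's proof buys is brevity and a clean reduction to a known theorem; what yours buys is self-containedness, an explicit witnessing theory (useful if one wants to see \emph{which} consequences fail), and independence from the general machinery of \cite{dist} beyond plain soundness. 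One small caveat: you are implicitly using the standard truth-preserving (local) consequence relation $T\models_{K}\varphi$, which is the one intended by the paper and by \cite{dist}, rather than the global one the paper's preliminaries literally state; under the intended reading your argument is complete.
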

\begin{proof}
Suppose not: from the results of \cite[Theorem 3.5]{dist} this is equivalent to claim that every countable BL$_\text{Chang}$-chain embeds into $\omega\mathcal{V}$. But, this would imply that every countable totally ordered cancellative hoop embeds into $(0,1]_C$: this means that every countable product-chain embeds into $[0,1]_\Pi$, that is product logic is strongly complete w.r.t $[0,1]_\Pi$. As it is well known (see \cite[Corollary 4.1.18]{haj}), this is false.
\end{proof}
With an analogous proof we obtain
\begin{theorem}
{\L}$_\text{Chang}$ is not strongly complete w.r.t. $\mathcal{V}$
\end{theorem}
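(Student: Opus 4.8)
The plan is to follow the same strategy as for BL$_\text{Chang}$, reducing once more to the known failure of strong completeness for product logic. First I would argue by contradiction, assuming that {\L}$_\text{Chang}$ is strongly complete w.r.t. $\mathcal{V}$. By \cite[Theorem 3.5]{dist} this is equivalent to the assertion that every countable {\L}$_\text{Chang}$-chain embeds into $\mathcal{V}$. Now the chains of $\mathbf{V}(C)$ are exactly the perfect MV-chains (\Cref{teo:perfcha}), and by \Cref{teo:perfrot} these coincide, up to isomorphism, with the disconnected rotations of (totally ordered) cancellative hoops. Hence the assumption is equivalent to: every countable perfect MV-chain embeds into $\mathcal{V}=(0,1]^*_C$.

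The key step I would then carry out is to extract, from an embedding $h\colon\mathcal{A}^*\to\mathcal{V}$, an embedding of the underlying cancellative hoops. The point is that any MV-embedding preserves the order of each element: since $h$ is an injective homomorphism with $h(0)=0$, we have $x^n=0$ iff $h(x)^n=0$, so $ord(x)=ord(h(x))$. In a disconnected rotation $\mathcal{A}^*=A\cup A'$ the elements of infinite order are precisely those of the positive part $A$ (for $a\in A$ one has $a^n\in A$, never $0$, whereas $b^2=0$ for every $b\in A'$), and likewise the infinite-order part of $\mathcal{V}$ is exactly $(0,1]_C$. Therefore $h$ maps $A$ into $(0,1]_C$, and since $A$ is closed under the hoop operations of $\mathcal{A}^*$, the restriction $h|_A$ is an embedding of cancellative hoops. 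Applying this to $\mathcal{B}^*$ for an arbitrary countable totally ordered cancellative hoop $\mathcal{B}$, I would conclude that every countable totally ordered cancellative hoop embeds into $(0,1]_C$.

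From here the contradiction is immediate, exactly as in the previous theorem: every countable product-chain is of the form $\mathbf{2}\oplus\mathcal{B}$ with $\mathcal{B}$ a cancellative hoop (\cite{eghm}), and $[0,1]_\Pi\simeq\mathbf{2}\oplus(0,1]_C$, so the above would make every countable product-chain embed into $[0,1]_\Pi$, i.e. product logic strongly complete w.r.t. $[0,1]_\Pi$ --- which is false by \cite[Corollary 4.1.18]{haj}. The only delicate point in the whole argument is the middle step, namely checking that the embedding sends the radical (infinite-order) part into $(0,1]_C$ so that it restricts to a cancellative-hoop embedding; once this is secured, the reduction to product logic is routine.
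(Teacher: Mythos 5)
Your proof is correct and follows essentially the same route as the paper, which simply declares the argument ``analogous'' to the one for BL$_\text{Chang}$: reduce, via \cite[Theorem 3.5]{dist}, to the known failure of strong completeness of product logic w.r.t.\ $[0,1]_\Pi$. The middle step you isolate --- that an embedding of $\mathcal{B}^*$ into $\mathcal{V}$ must send the infinite-order part $B$ into $(0,1]_C$ because injective homomorphisms preserve $ord$, and hence restricts to a cancellative-hoop embedding --- is precisely the detail the paper leaves implicit, and your justification of it is sound.
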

However, thanks to \cite[Theorem 3]{monchain} we can claim
\begin{theorem}\label{teo:strcompl}
There exist a {\L}$_\text{Chang}$-chain $\mathcal{A}$ and a BL$_\text{Chang}$-chain $\mathcal{B}$ such that {\L}$_\text{Chang}$ is strongly complete w.r.t. $\mathcal{A}$ and BL$_\text{Chang}$ is strongly complete w.r.t. $\mathcal{B}$.
\end{theorem}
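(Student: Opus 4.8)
The plan is to deduce both strong-completeness statements from the finite strong completeness results already established, by appealing to the ultrapower machinery of \cite[Theorem 3]{monchain}. The conceptual point is that finite strong completeness w.r.t.\ a single chain is, by \Cref{teo:dist}, the assertion that every countable L-chain is \emph{partially} embeddable into that chain, whereas (by \cite[Theorem 3.5]{dist}) strong completeness w.r.t.\ a chain is the stronger assertion that every countable L-chain \emph{genuinely} embeds into it. Passing to a sufficiently saturated elementary extension (an ultrapower) of the chain bridges exactly this gap, turning each partial embedding into a full embedding of the whole countable chain.

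First I would record the two inputs. By \Cref{cor:comp} the logic {\L}$_\text{Chang}$ is finitely strongly complete w.r.t.\ the single chain $\mathcal{V}$, and by \Cref{teo:blccomp} the logic BL$_\text{Chang}$ is finitely strongly complete w.r.t.\ the single chain $\omega\mathcal{V}$. In each case the target class consists of one totally ordered algebra, so the hypotheses of \cite[Theorem 3]{monchain} are met.

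Next I would apply \cite[Theorem 3]{monchain} separately to the two situations. This yields a chain $\mathcal{A}$, obtained as a suitable ultrapower of $\mathcal{V}$, with {\L}$_\text{Chang}$ strongly complete w.r.t.\ $\mathcal{A}$, and a chain $\mathcal{B}$, obtained as a suitable ultrapower of $\omega\mathcal{V}$, with BL$_\text{Chang}$ strongly complete w.r.t.\ $\mathcal{B}$. To finish I would only need to check that these ultrapowers are again chains in the correct varieties: total order is a first-order (universal) condition and membership in a variety is defined by equations, both preserved under ultrapowers, so $\mathcal{A}$ is a {\L}$_\text{Chang}$-chain and $\mathcal{B}$ is a BL$_\text{Chang}$-chain.

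The step that carries the real weight lies inside \cite[Theorem 3]{monchain}: verifying that passing to an $\omega_1$-saturated ultrapower genuinely upgrades partial embeddability of every countable chain into full embeddability. Concretely, for a countable L-chain $\mathcal{C}$ one must realize in the ultrapower the countable type that describes an embedding of $\mathcal{C}$; its finite fragments are satisfiable precisely because every finite partial subalgebra of $\mathcal{C}$ embeds into the base chain (finite strong completeness), and $\omega_1$-saturation then produces a solution. Since I am invoking the cited theorem as a black box, in our write-up this reduces to confirming that its hypotheses apply, the genuinely nontrivial model-theoretic content having already been discharged there.
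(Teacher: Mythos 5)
Your proposal is correct and follows essentially the same route as the paper, which simply derives the statement from the finite strong completeness results (\Cref{cor:comp} and \Cref{teo:blccomp}) by invoking \cite[Theorem 3]{monchain}. Your additional remarks on why ultrapowers stay within the respective varieties and on the saturation argument inside the cited theorem are accurate elaborations of what the paper leaves implicit.
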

\begin{problem}
Which can be some concrete examples of such $\mathcal{A}$ and $\mathcal{B}$ ?
\end{problem}

\section{First-order logics}\label{sec:first}
We assume that the reader is acquainted with the formalization of first-order logics, as developed in \cite{haj, ch}.

Briefly, we work with (first-order) languages without equality, containing only predicate and constant symbols: as quantifiers we have $\forall$ and $\exists$. The notions of terms and formulas are defined inductively like in classical case.

As regards to semantics, given an axiomatic extension L of BL we restrict to L-chains: the first-order version of L is called L$\forall$ (see \cite{haj, ch} for an axiomatization). A first-order $\mathcal{A}$-interpretation ($\mathcal{A}$ being an L-chain) is a structure $\mathbf{M}=\lag M, \{r_P\}_{p\in \mathbf{P}}, \{m_c\}_{c\in \mathbf{C}}\rog$, where $M$ is a non-empty set, every $r_P$ is a fuzzy $ariety(P)$-ary relation, over $M$, in which we interpretate the predicate $P$, and every $m_c$ is an element of $M$, in which we map the constant $c$.

Given a map $v:\, VAR\to M$, the interpretation of $\lVert\varphi\rVert_{\mathbf{M}, v}^\mathcal{A}$ in this semantics is defined in a Tarskian way: in particular the universally quantified formulas are defined as the infimum (over $\mathcal{A}$) of truth values, whereas those existentially quantified are evaluated as the supremum. Note that these $\inf$ and $\sup$ could not exist in $\mathcal{A}$: an $\mathcal{A}$-model $\mathbf{M}$ is called \emph{safe} if $\lVert\varphi\rVert^\mathcal{A}_{\mathbf{M}, v}$ is defined for every $\varphi$ and $v$.

A model is called \emph{witnessed} if the universally (existentially) quantified formulas are evaluated by taking the minimum (maximum) of truth values in place of the infimum (supremum): see \cite{witn, ch1, ch} for details.

The notions of soundness and completeness are defined by restricting to safe models (even if in some cases it is possible to enlarge the class of models: see \cite{bm}): see \cite{haj, ch, ch1} for details.
\paragraph*{}
We begin with a positive result about \L$_\text{Chang}\forall$.
\begin{definition}
Let L be an axiomatic extension of BL. With L$\forall^w$ we define the extension of L$\forall$ with the following axioms
\begin{align}
\tag*{(C$\forall$)}&(\exists y)(\varphi(y)\to (\forall x)\varphi(x))\label{cforall}\\
\tag*{(C$\exists$)}&(\exists y)((\exists x)\varphi(x)\to\varphi(y))\label{cexist}.
\end{align}
\end{definition}

\begin{theorem}[{\cite[Proposition 6]{ch1}}]
\L$\forall$ coincides with \L$\forall^w$, that is \\\L$\forall\vdash$\emph{\ref{cforall},\ref{cexist}}.
\end{theorem}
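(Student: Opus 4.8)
The plan is to argue semantically, via the completeness of \L$\forall$ with respect to safe models over MV-chains (see \cite{haj,ch}): since that completeness is the nontrivial ingredient, it suffices to show that \ref{cforall} and \ref{cexist} take value $1$ in \emph{every} safe $\mathcal{A}$-model, for every MV-chain $\mathcal{A}$. Once this is established, both axioms are theorems of \L$\forall$, which is exactly the assertion $\text{\L}\forall\vdash\ref{cforall},\ref{cexist}$.

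First I would unwind the truth values. Fix a safe $\mathcal{A}$-model $\mathbf{M}$ and an evaluation, and abbreviate $a_y=\lVert\varphi(y)\rVert$. For \ref{cexist}, setting $s=\sup_y a_y=\lVert(\exists x)\varphi(x)\rVert$ (which exists, by safeness), the value of the axiom is $\sup_y(s\Rightarrow a_y)$; for \ref{cforall}, setting $i=\inf_y a_y=\lVert(\forall x)\varphi(x)\rVert$, the value is $\sup_y(a_y\Rightarrow i)$. Using that $\sim$ is an order anti-isomorphism of $\mathcal{A}$ (so $\sup_y\sim a_y=\sim i$) together with the MV contraposition law $a_y\Rightarrow i=\sim i\Rightarrow\sim a_y$, the second expression becomes $\sup_y(s'\Rightarrow b_y)$ with $b_y=\sim a_y$ and $s'=\sup_y b_y=\sim i$. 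Hence \ref{cforall} reduces to the same shape as \ref{cexist}, and it is enough to prove the single lemma: in any MV-chain, if $s=\sup_y a_y$ exists, then $\sup_y(s\Rightarrow a_y)=1$.

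To prove the lemma I would proceed by contradiction. If $s=0$ then $s\Rightarrow a_y=1$ for all $y$ and the supremum is $1$; so assume $s>0$ and suppose $u=\sup_y(s\Rightarrow a_y)<1$ (this supremum exists by safeness). Since $a_y\le s$, divisibility gives $a_y=s\sqcap a_y=s*(s\Rightarrow a_y)$; as $s\Rightarrow a_y\le u$, monotonicity of $*$ yields $a_y\le s*u$ for all $y$, whence $s=\sup_y a_y\le s*u\le s$ and therefore $s*u=s$. The crux is now to derive $u=1$ from $s*u=s$ with $s>0$: this is exactly the cancellation property of MV-chains. Invoking the Chang--Mundici representation (\cite{mun}), $\mathcal{A}$ is the unit interval of a linearly ordered abelian group with strong unit, with $x*y=(x+y-1)\vee 0$; then $s*u=s$ and $s>0$ force $s+u-1=s$, i.e.\ $u=1$, contradicting $u<1$. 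This contradiction proves the lemma, hence the value-$1$ claim for both axioms, hence the theorem.

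The main obstacle is precisely this last step, the passage from $s*u=s$ to $u=1$. It is where the involutive (\L ukasiewicz) structure is essential and where a general BL-chain argument would break down: in a BL-chain one can have $s*u=s$ with $u<1$ (for instance inside a cancellative-hoop or G\"odel component), so the lemma, and with it \ref{cforall} and \ref{cexist}, genuinely fail outside the MV setting. Thus the whole proof hinges on isolating the cancellation of MV-chains (equivalently, the ordered-group representation) as the property that forces the suprema in \ref{cforall} and \ref{cexist} to collapse to $1$.
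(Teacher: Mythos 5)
This theorem is one of the results the paper imports verbatim from the literature (it is stated with the citation \cite[Proposition 6]{ch1} and given no proof here), so there is no in-paper argument to compare against; judged on its own terms, your semantic proof is correct and is essentially the standard argument for the cited fact. The two nontrivial ingredients are both handled properly: you invoke the general (chain) completeness of \L$\forall$ with respect to safe models over MV-chains as a black box, which is legitimate since that is exactly the kind of background result this paper also takes for granted; and you correctly reduce (C$\forall$) to (C$\exists$) using that $\sim$ is an order-reversing involution on an MV-chain (so it turns the infimum defining $(\forall x)\varphi(x)$ into a supremum and contraposes the implication), which is precisely where involutivity enters and why the reduction is unavailable over general BL-chains. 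The key lemma --- if $s=\sup_y a_y$ and $u=\sup_y(s\Rightarrow a_y)$ both exist in an MV-chain then $u=1$ --- is proved soundly: safety guarantees the existence of all the suprema and infima you use, divisibility gives $a_y=s*(s\Rightarrow a_y)\le s*u$, hence $s*u=s$, and cancellation for MV-chains (via the Mundici/Chang group representation, or equivalently the strict monotonicity of $x\mapsto s*x$ for $s>0$) forces $u=1$. A marginally more direct phrasing of the same idea avoids the contradiction: for any $c<1$ and $s>0$ one has $s*c<s$, so by leastness of $s$ some $a_y$ satisfies $a_y\ge s*c$, i.e.\ $s\Rightarrow a_y\ge c$ by residuation, whence the supremum is $1$; but this is the same cancellation property you isolate. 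Your closing remark correctly identifies why the argument, and the theorem itself, fails beyond \L{}: in a G\"odel or product component one has $s*u=s$ with $u<1$, which is consistent with the paper's later proof that no single chain is complete for BL$_\text{Chang}\forall$.
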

An immediate consequence is:
\begin{corollary}\label{cor:witnluk}
Let L be an axiomatic extension of \L. Then L$\forall$ coincides with L$\forall^w$.
\end{corollary}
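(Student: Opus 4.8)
The plan is to read the corollary off the preceding theorem by exploiting the fact that derivability is monotone along axiomatic extensions. First I would unwind the definitions. Since $L$ is an axiomatic extension of \L, it arises by adjoining to \L a collection of propositional axiom schemata; correspondingly, under the standard axiomatization of first-order fuzzy logics in \cite{haj, ch}, the calculus $L\forall$ arises from \L$\forall$ by adjoining exactly the same schemata (as first-order instances), while keeping all the quantifier axioms and rules unchanged. In particular, every derivation carried out in \L$\forall$ is still a legal derivation in $L\forall$, so every theorem of \L$\forall$ is a theorem of $L\forall$.

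Next I would invoke the preceding theorem (\cite[Proposition 6]{ch1}), which gives \L$\forall\vdash$\ref{cforall} and \L$\forall\vdash$\ref{cexist}. By the monotonicity just recorded, this immediately yields $L\forall\vdash$\ref{cforall} and $L\forall\vdash$\ref{cexist}.

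Finally, by definition $L\forall^w$ is the extension of $L\forall$ obtained by adjoining the two axioms \ref{cforall} and \ref{cexist}. Since both formulas are already theorems of $L\forall$, adjoining them as axioms produces no new theorems, and hence $L\forall$ and $L\forall^w$ have precisely the same theorems; that is, the two systems coincide.

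The only place that calls for a moment's care --- and thus the main, though minor, obstacle --- is the first step: one must be sure that forming the first-order companion commutes with adding propositional axioms, so that $L\forall$ really does contain \L$\forall$ as a subsystem. This is essentially built into the definition of $L\forall$, where the non-logical axiom block is precisely the propositional axioms of $L$; enlarging $L$ over \L only enlarges that block and leaves untouched the quantifier machinery used to derive \ref{cforall} and \ref{cexist} in \L$\forall$.
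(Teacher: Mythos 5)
Your argument is correct and is exactly the reasoning the paper intends: the corollary is stated there as an ``immediate consequence'' of the preceding theorem, the point being precisely that $L\forall$ extends \L$\forall$ and therefore inherits the derivability of \ref{cforall} and \ref{cexist}, so that adjoining them as axioms adds nothing. Your extra care about why forming the first-order companion commutes with adding propositional axiom schemata is a reasonable explicitation of what the paper leaves tacit.
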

\begin{theorem}[{\cite[Theorem 8]{ch1}}]\label{teo:witncompl}
Let L be an axiomatic extension of BL. Then L$\forall^w$ enjoys the strong witnessed completeness with respect to the class $K$ of L-chains, i.e.
\begin{equation*}
T\vdash_{L\forall^w}\varphi\quad\text{iff}\quad \lVert\varphi\rVert_\mathbf{M}^\mathcal{A}=1,
\end{equation*}
for every theory $T$, formula $\varphi$, algebra $\mathcal{A}\in K$ and witnessed $\mathcal{A}$-model $\mathbf{M}$ such that $\lVert\psi\rVert_\mathbf{M}^\mathcal{A}=1$ for every $\psi\in T$.
\end{theorem}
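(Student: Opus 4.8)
The plan is to establish the two directions of the biconditional separately, the left-to-right (soundness) being routine and the right-to-left (completeness) carrying all the difficulty. For soundness I would verify that every axiom of L$\forall^w$ evaluates to $1$ in each witnessed $\mathcal{A}$-model with $\mathcal{A}\in K$, and that modus ponens and the generalization rule preserve the value $1$. The only axioms beyond those of L$\forall$ are \ref{cforall} and \ref{cexist}, and here witnessedness is exactly what is needed: since in a witnessed model $\lVert(\forall x)\varphi(x)\rVert$ is a genuine minimum, there is a domain element $m$ attaining it, and interpreting the bound variable $y$ by $m$ makes the inner implication of \ref{cforall} evaluate to $1$; the existential case is dual. (This also shows why these axioms fail in general, non-witnessed models, so the restriction to witnessed models is indispensable.)

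For completeness, suppose $T\not\vdash_{L\forall^w}\varphi$; I would build, by a Henkin-style term-model construction, a witnessed $\mathcal{A}$-model of $T$ refuting $\varphi$. First I would enlarge the language with countably many fresh constants and iterate a witnessing step: for each formula $\psi(x)$ of the growing language I add new constants $c_\psi,d_\psi$ together with the instances $\psi(c_\psi)\to(\forall x)\psi(x)$ and $(\exists x)\psi(x)\to\psi(d_\psi)$, obtaining in the limit a theory $T'$ in which every quantified formula has a term witness, while preserving $T'\not\vdash\varphi$; the witness instances are conservative precisely because \ref{cforall} and \ref{cexist} are already theorems. Next, exploiting the prelinearity built into BL, I would extend $T'$ to a \emph{linear} theory $T^{*}$ (for all $\alpha,\beta$ either $T^{*}\vdash\alpha\to\beta$ or $T^{*}\vdash\beta\to\alpha$) that still does not prove $\varphi$, by a Lindenbaum-type maximality argument applicable to arbitrary theories, which is what yields the \emph{strong} form of completeness.

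The term model is then the Lindenbaum--Tarski algebra $\mathcal{A}=\mathrm{Fm}/{\equiv_{T^{*}}}$, which is an L-chain (hence lies in $K$) because $T^{*}$ is linear, with domain the set of closed terms of the expanded language and with each predicate $P$ interpreted so that $\lVert P(t_1,\dots,t_k)\rVert=[P(t_1,\dots,t_k)]_{T^{*}}$. A routine induction on formula complexity shows $\lVert\alpha\rVert_{\mathbf{M}}=[\alpha]_{T^{*}}$ for every closed $\alpha$; in particular $\lVert\psi\rVert_{\mathbf{M}}=1$ for $\psi\in T$ while $\lVert\varphi\rVert_{\mathbf{M}}\neq 1$. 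The crux, and the step I expect to be the main obstacle, is checking that $\mathbf{M}$ is genuinely \emph{witnessed}: one must show that the infimum defining $\lVert(\forall x)\psi(x)\rVert$ over the whole term domain is actually attained. This is exactly what the Henkin witness delivers, for the axiom $\psi(c_\psi)\to(\forall x)\psi(x)$ has value $1$, forcing $\lVert\psi(c_\psi)\rVert\le\lVert(\forall x)\psi(x)\rVert$; combined with the reverse inequality $\lVert(\forall x)\psi(x)\rVert\le\lVert\psi(t)\rVert$ valid for every term $t$, this gives equality and hence attainment at $c_\psi$, with the existential quantifier handled dually through $d_\psi$. The delicate bookkeeping is to interleave the witnessing iteration with the linearization so that adding witnesses does not spoil linearity, nor linearization the availability of witnesses; managing this simultaneous closure is where the real work of the argument lies.
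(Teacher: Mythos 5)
This theorem is not proved in the paper at all: it is quoted verbatim from \cite[Theorem 8]{ch1}, so there is no in-paper argument to compare yours against. Your sketch is a correct reconstruction of the standard Henkin-style proof given in that reference: soundness of \ref{cforall} and \ref{cexist} in witnessed models via attainment of the minimum/maximum, conservative addition of witnessing constants, a Lindenbaum-type linearization using prelinearity, and the canonical term model over the Lindenbaum--Tarski chain, where the witness axioms $\psi(c_\psi)\to(\forall x)\psi(x)$ and $(\exists x)\psi(x)\to\psi(d_\psi)$, combined with the specification inequalities, force the infimum and supremum over closed terms to be attained. One remark on the step you flag as the main difficulty: the ``delicate interleaving'' of witnessing and linearization is in fact unnecessary. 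Once the fully Henkin theory $T'$ has been built by iterating the witnessing step through the limit language, every extension $T^{*}\supseteq T'$ \emph{in that same language} still contains (hence proves) all the witness axioms, so the Henkin property is automatically inherited and the linearization can be carried out entirely after the witnessing stage; the only genuine care needed is that the witnessing iteration covers all formulas with parameters from the final expanded language, which your limit construction already does.
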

\begin{lemma}[{\cite[Lemma 1]{monchain}}]\label{lem:witn}
Let L be an axiomatic extension of BL, let $\mathcal{A}$ be an L-chain, let $\mathcal{B}$ be
an L-chain such that $A\subseteq B$ and let $\mathbf{M}$ be a witnessed $\mathcal{A}$-structure. Then for every
formula $\varphi$ and evaluation $v$, we have
$\lVert\varphi\rVert^\mathcal{A}_{\mathbf{M},v}=\lVert\varphi\rVert^\mathcal{B}_{\mathbf{M},v}$.
\end{lemma}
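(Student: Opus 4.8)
The plan is to prove the statement by induction on the structure of the formula $\varphi$, the decisive case being the quantifiers, where the witnessedness hypothesis on $\mathbf{M}$ is exactly what is needed. First I would record the underlying setup: reading $A\subseteq B$ as saying that $\mathcal{A}$ is a subalgebra of $\mathcal{B}$, the operations and the order of $\mathcal{A}$ are the restrictions of those of $\mathcal{B}$, and since each fuzzy relation $r_P$ of $\mathbf{M}$ takes values in $A\subseteq B$ while the domain $M$ and the constants $m_c$ are unchanged, the very same structure $\mathbf{M}$ can be read as a $\mathcal{B}$-structure. Part of what the induction must establish is that $\lVert\varphi\rVert^{\mathcal{B}}_{\mathbf{M},v}$ is even defined.

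For the base case, an atomic formula $P(t_1,\dots,t_n)$ evaluates (after interpreting the closed terms via the $m_c$ and $v$) to an element $r_P(\dots)\in A$, which is literally the same element of $A\subseteq B$ in both readings. For the propositional connectives $\&,\to$ (and the derived $\sqcap,\sqcup,\bot$), the truth value is obtained by applying a basic operation of the algebra to the values of the immediate subformulas; by the induction hypothesis these values agree in $\mathcal{A}$ and $\mathcal{B}$ and lie in $A$, and since the operations of $\mathcal{B}$ restrict to those of $\mathcal{A}$ on $A$, the results coincide.

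The crux is the quantifier step; I treat $\varphi=(\forall x)\psi$, the case $\varphi=(\exists x)\psi$ being dual with $\sup$/$\max$ in place of $\inf$/$\min$. Because $\mathbf{M}$ is witnessed over $\mathcal{A}$, the value $\lVert(\forall x)\psi\rVert^{\mathcal{A}}_{\mathbf{M},v}$ is not merely the infimum but a minimum of the set $S=\{\lVert\psi\rVert^{\mathcal{A}}_{\mathbf{M},v[x\mapsto a]}:a\in M\}$, attained at some witness $a_0\in M$; call this minimum $m_0\in A$. By the induction hypothesis the set $S$ is unchanged when $\psi$ is evaluated in $\mathcal{B}$, so the family of $\mathcal{B}$-values ranges over the same subset $S$ of $A$. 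Since the order of $\mathcal{B}$ restricts to that of $\mathcal{A}$ on $A$, the element $m_0$, being an actual member of $S$ and a lower bound of $S$, is still the least element of $S$ in $\mathcal{B}$; hence it is the infimum of $S$ in $\mathcal{B}$, i.e. $\lVert(\forall x)\psi\rVert^{\mathcal{B}}_{\mathbf{M},v}=m_0=\lVert(\forall x)\psi\rVert^{\mathcal{A}}_{\mathbf{M},v}$, and in particular this $\mathcal{B}$-value is defined.

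The main obstacle, and the only place where the hypotheses are genuinely used, is precisely this last point: passing to the larger chain $\mathcal{B}$ could a priori raise an infimum, since $\mathcal{B}$ may contain new lower bounds of $S$ lying strictly above $\inf_{\mathcal{A}}S$. Witnessedness removes this danger, because once the extremum is attained inside $\mathcal{A}$ as a genuine minimum (or maximum) it is a member of the value set and therefore remains the extremum in any ordered extension. This is exactly why the statement is phrased for witnessed structures, and it is what makes the analogous claim fail, in general, for arbitrary (merely safe) models.
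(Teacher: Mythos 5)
Your argument is correct. Note, however, that the paper does not prove this lemma at all: it is imported verbatim from \cite[Lemma 1]{monchain}, so there is no in-paper proof to compare against. Your induction is the standard (and, as far as I can tell, the intended) argument: the atomic and connective cases reduce to $\mathcal{A}$ being a subalgebra of $\mathcal{B}$, and the quantifier case is where witnessedness does all the work --- an attained minimum (maximum) of the value set is an element of that set, hence remains its least (greatest) element, and therefore its infimum (supremum), in any chain extending the order; this simultaneously shows the $\mathcal{B}$-value is defined, i.e.\ that $\mathbf{M}$ stays safe over $\mathcal{B}$. You correctly identify that without witnessedness the step fails, since $\mathcal{B}$ may contain new lower bounds of the value set strictly above its infimum in $\mathcal{A}$. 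No gaps.
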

\begin{theorem}
There is a \L$_\text{Chang}$-chain such that \L$_\text{Chang}\forall$ is strongly complete w.r.t. it. More in general, every \L$_\text{Chang}$-chain that is strongly complete w.r.t \L$_\text{Chang}$ is also strongly complete w.r.t. \L$_\text{Chang}\forall$.
\end{theorem}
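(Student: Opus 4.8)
The plan is to exploit the fact that \L$_\text{Chang}$ is an axiomatic extension of \L, which makes quantifier witnessing available for free. First I would observe that, by \Cref{cor:witnluk}, \L$_\text{Chang}\forall$ coincides with its witnessed version \L$_\text{Chang}\forall^w$; hence by \Cref{teo:witncompl} (applied with $L=$\,\L$_\text{Chang}$) the logic \L$_\text{Chang}\forall$ enjoys the \emph{strong witnessed} completeness with respect to the class of all \L$_\text{Chang}$-chains. This already disposes of the first claim modulo the second: it suffices to feed the general statement with the \L$_\text{Chang}$-chain $\mathcal{A}$ supplied by \Cref{teo:strcompl}, which is strongly complete (propositionally) with respect to \L$_\text{Chang}$. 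So the real work is the general statement, and I would concentrate on that.

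Fix a \L$_\text{Chang}$-chain $\mathcal{A}$ that is strongly complete w.r.t.\ the propositional logic \L$_\text{Chang}$, and aim at strong completeness of \L$_\text{Chang}\forall$ w.r.t.\ $\mathcal{A}$. Soundness being routine, I would treat only the completeness direction. Suppose $\varphi$ is not provable from $T$ in \L$_\text{Chang}\forall$. Working in a countable language, the set of formulas is countable, so by the strong witnessed completeness above there exist a \emph{countable} \L$_\text{Chang}$-chain $\mathcal{C}$ and a witnessed $\mathcal{C}$-model $\mathbf{M}$ with $\lVert\psi\rVert^\mathcal{C}_\mathbf{M}=1$ for every $\psi\in T$ and $\lVert\varphi\rVert^\mathcal{C}_\mathbf{M}<1$.

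The transfer step is where the propositional hypothesis on $\mathcal{A}$ enters. By the criterion of \cite[Theorem 3.5]{dist} (already invoked in the paper), strong completeness of \L$_\text{Chang}$ with respect to the single chain $\mathcal{A}$ is equivalent to the statement that every countable \L$_\text{Chang}$-chain embeds into $\mathcal{A}$. Applying this to $\mathcal{C}$ and identifying $\mathcal{C}$ with its image, I may assume $C\subseteq A$. Since $\mathbf{M}$ is a witnessed $\mathcal{C}$-structure, \Cref{lem:witn} (with the roles of its $\mathcal{A},\mathcal{B}$ played by $\mathcal{C},\mathcal{A}$) yields $\lVert\chi\rVert^\mathcal{C}_{\mathbf{M},v}=\lVert\chi\rVert^\mathcal{A}_{\mathbf{M},v}$ for every formula $\chi$ and evaluation $v$. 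In particular $\mathbf{M}$, read as an $\mathcal{A}$-structure, is \emph{safe} (each relevant infimum and supremum is attained in $\mathcal{C}$, hence in $\mathcal{A}$), it models $T$, and satisfies $\lVert\varphi\rVert^\mathcal{A}_\mathbf{M}<1$. Thus $T\not\models_\mathcal{A}\varphi$, the contrapositive of what was required; combined with soundness this gives strong completeness of \L$_\text{Chang}\forall$ w.r.t.\ $\mathcal{A}$, and specializing $\mathcal{A}$ to the chain of \Cref{teo:strcompl} gives the existence part.

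I expect the principal obstacle to be the bookkeeping that forces the witnessed counterexample onto a \emph{countable} chain, since the embedding criterion of \cite{dist} is available only for countable chains: this countability is exactly what bridges the first-order witnessed model back to the propositional strong completeness of $\mathcal{A}$. The second delicate point is checking that a witnessed $\mathcal{C}$-model becomes a genuinely safe $\mathcal{A}$-model after the embedding, rather than merely a partial one; this is precisely what \Cref{lem:witn} secures, because witnessed infima and suprema are attained and are preserved by the inclusion $C\subseteq A$.
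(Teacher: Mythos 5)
Your proposal is correct and follows essentially the same route as the paper's own proof: witnessed completeness of \L$_\text{Chang}\forall$ via \Cref{cor:witnluk} and \Cref{teo:witncompl}, extraction of a countable witnessed counter-model, embedding of that countable chain into $\mathcal{A}$ by the criterion of \cite[Theorem 3.5]{dist}, and transfer of truth values by \Cref{lem:witn}. The only difference is that you spell out the countability and safety bookkeeping which the paper leaves implicit.
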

\begin{proof}
An adaptation of the proof for the analogous result, given in \cite[Theorem 16]{monchain}, for \L$\forall$.

From \Cref{teo:strcompl} we know that there is a \L$_\text{Chang}$-chain $\mathcal{A}$ strongly complete w.r.t. \L$_\text{Chang}$: from \cite[Theorem 3.5]{dist} this is equivalent to claim that every countable \L$_\text{Chang}$-chain embeds into $\mathcal{A}$.
We show that $\mathcal{A}$ is also strongly complete w.r.t. \L$_\text{Chang}\forall$.

Suppose that $T\not\vdash_{\text{\L}_\text{Chang}\forall}\varphi$. Thanks to \Cref{cor:witnluk} and \Cref{teo:witncompl} there is a countable \L$_\text{Chang}$-chain $\mathcal{C}$ and a witnessed $\mathcal{C}$-model $\mathbf{M}$ such that $\lVert\psi\rVert_\mathbf{M}^\mathcal{C}=1$, for every $\psi\in T$, but $\lVert\varphi\rVert_\mathbf{M}^\mathcal{C}<1$. Finally, from \Cref{lem:witn} we have that $\lVert\psi\rVert_\mathbf{M}^\mathcal{A}=1$, for every $\psi\in T$ and $\lVert\varphi\rVert_\mathbf{M}^\mathcal{A}=\lVert\varphi\rVert_\mathbf{M}^\mathcal{C}<1$: this completes the proof.
\end{proof}
For BL$_\text{Chang}\forall$, however, the situation is not so good.
\begin{theorem}
BL$_\text{Chang}\forall$ cannot enjoy the completeness w.r.t. a single BL$_\text{Chang}$-chain.
\end{theorem}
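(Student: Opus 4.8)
The plan is to argue by contradiction, exploiting the fact that the variety of BL$_\text{Chang}$-algebras contains genuine product-like (cancellative) behaviour, which makes the first-order theory of any adequate single chain too complex to be recursively enumerable. Suppose, then, that there were a BL$_\text{Chang}$-chain $\mathcal{B}$ with respect to which BL$_\text{Chang}\forall$ is complete. Since BL$_\text{Chang}\forall$ is recursively axiomatised over BL$\forall$, its set of theorems is recursively enumerable; under the completeness assumption this set coincides with the collection of formulas valid in all safe $\mathcal{B}$-models, so the latter would be recursively enumerable as well. The whole argument is then aimed at contradicting this last conclusion.

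First I would pin down the structure of $\mathcal{B}$. Restricting the assumed completeness to the propositional fragment (the empty-theory case) shows that $\mathcal{B}$ must be generic for the variety of BL$_\text{Chang}$-algebras. Combining this with \Cref{teo:chainstruct}, \Cref{cor:blchacont} and the fact that every standard product chain $[0,1]_\Pi\simeq\mathbf{2}\oplus(0,1]_C$ is a BL$_\text{Chang}$-chain, I would conclude that $\mathcal{B}$ is infinite and that its ordinal-sum decomposition must contain an infinite cancellative hoop block, that is, a product-like segment on which suprema need not be attained. Intuitively this is the very phenomenon that makes BL$_\text{Chang}\forall$ differ from its witnessed counterpart: unlike the situation for \L$_\text{Chang}$ recorded in \Cref{cor:witnluk}, the witness axioms \ref{cexist} and \ref{cforall} are \emph{not} provable here, precisely because a cancellative component admits safe models in which the relevant supremum is not a maximum.

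The heart of the proof is then to turn this cancellative segment into a device for encoding hard arithmetic. Using the product-like part of $\mathcal{B}$ one interprets product predicate logic $\Pi\forall$ over a standard product chain inside the $\mathcal{B}$-semantics, relativising the quantifiers to the cancellative block by means of the definable connectives and reading off the product truth-values there, so that a product sentence is a standard-product tautology if and only if its translation is valid in all safe $\mathcal{B}$-models. Since the set of first-order tautologies of the standard product chain is known to be non-arithmetical (and in particular not recursively enumerable), the set of $\mathcal{B}$-valid formulas would inherit this complexity, contradicting the recursive enumerability established in the first step. This yields the theorem.

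The main obstacle is the reduction in the previous paragraph: one must make the interpretation of $\Pi\forall$ robust against the ambient ordinal-sum structure of $\mathcal{B}$ — guaranteeing that the first (perfect MV-chain) component and the remaining pseudo-perfect Wajsberg-hoop components do not interfere with the encoding — and must transfer the non-arithmeticity of standard product predicate logic to the concrete chain $\mathcal{B}$ rather than merely to the class of all product chains. A cleaner, more self-contained alternative I would also pursue is to avoid complexity theory and instead produce an explicit non-theorem of BL$_\text{Chang}\forall$ that is nonetheless forced to be valid in every safe $\mathcal{B}$-model: the natural candidate is a witnessing-type sentence built from \ref{cexist}, whose failure over $\mathcal{B}$ would require a non-attained supremum, while its validity is simultaneously forced by the requirement that $\mathcal{B}$ refute all genuine non-theorems (using \Cref{teo:witncompl} and \Cref{lem:witn} to locate refutations among witnessed models). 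Reconciling these two incompatible demands on a single fixed chain is exactly what cannot be done, and isolating such a sentence explicitly is the most delicate point of the argument.
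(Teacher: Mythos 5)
Your main route has a genuine gap, and it appears already at the structural step. From propositional genericity of $\mathcal{B}$ you cannot conclude that its ordinal-sum decomposition contains an infinite cancellative hoop component: the chain $\omega\mathcal{V}$ of \Cref{teo:blccomp} is generic for the variety of BL$_\text{Chang}$-algebras, and every one of its components is the \emph{bounded} Wajsberg hoop $\mathcal{V}$, i.e.\ a perfect MV-chain. What genericity actually forces is only that the first component of $\mathcal{B}$ is an infinite perfect MV-chain (otherwise, by \Cref{teo:chainstruct}, the first component is $\mathbf{2}$, so $\mathcal{B}\models\neg\neg x\to(\neg\neg x)^2$ while $\mathcal{V}$ refutes this equation). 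More seriously, even granting some cancellative or product-like segment inside $\mathcal{B}$, the reduction to the non-arithmeticity of product predicate logic does not go through: that result concerns the first-order tautologies of the \emph{standard} chain $[0,1]_\Pi$, whereas $\mathcal{B}$ is an arbitrary (possibly countable) generic chain, and nothing in your hypotheses places a copy of $(0,1]_C$ inside $\mathcal{B}$ or controls the arithmetical complexity of the first-order theory of whatever segment $\mathcal{B}$ happens to contain. Propositional genericity is an equational property and says nothing about the first-order tautologies of one particular chain, so the claimed clash with recursive enumerability is not established.

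Your ``cleaner alternative'' is in fact the strategy the paper follows, but you leave undone precisely the decisive step: exhibiting the separating sentence. The paper argues by cases on the first component $\mathcal{A}_0$ of the candidate chain. If $\mathcal{A}_0$ is finite, the chain is not even propositionally generic, as above. If $\mathcal{A}_0$ is infinite and densely ordered, the formula $(\forall x)\neg\neg P(x)\to\neg\neg(\forall x)P(x)$ is valid in the chain but fails in the G\"odel chain $[0,1]_G$, which is a BL$_\text{Chang}$-algebra by \Cref{cor:blchacont}, hence it is not a theorem of BL$_\text{Chang}\forall$. If $\mathcal{A}_0$ is infinite and not densely ordered, the variant $(\forall x)\neg\neg P(x)\to \neg\neg (\forall x) P(x)\vee \neg (\forall x) P(x)\to ((\forall x) P(x))^2$ plays the same role, refuted by the same $[0,1]_G$-model. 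These explicit formulas, imported from \cite{monchain}, are exactly the content you defer as ``the most delicate point''; note also that the counterexample models live in $[0,1]_G$ rather than in witnessed models located via \Cref{teo:witncompl} and \Cref{lem:witn}, since the witness axioms are not available outside the \L ukasiewicz setting. Without the case split on the order type of $\mathcal{A}_0$ and the formulas it supplies, the argument remains a plan rather than a proof.
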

\begin{proof}
The proof is an adaptation of the analogous result given in \cite[Theorem 17]{monchain} for BL$\forall$.

Let $\mathcal{A}$ be a BL$_\text{Chang}$-chain: call $\mathcal{A}_0$ its first component. We have three cases
\begin{itemize}
\item $\mathcal{A}_0$ is finite: from \Cref{teo:chainstruct} we have that $\mathcal{A}_0=\mathbf{2}$ and hence $\mathcal{A}\models (\neg\neg x)\rightarrow (\neg\neg x)^2$. However $\mathcal{V}\not\models (\neg\neg x)\rightarrow (\neg\neg x)^2$, where $\mathcal{V}$ is the chain introduced in \Cref{sec:propcompl}, and hence $\mathcal{A}$ cannot be complete w.r.t. BL$_\text{Chang}\forall$.
\item $\mathcal{A}_0$ is infinite and dense. As shown in \cite[Theorem 17]{monchain} the formula \\$(\forall x)\neg\neg P(x)\to \neg\neg (\forall x) P(x)$ is a tautology in every BL-chain whose first component is infinite and densely ordered: hence we have that $\mathcal{A}\models (\forall x)\neg\neg P(x)\to \neg\neg (\forall x) P(x)$. However it is easy to check that this formula fails in $[0,1]_G$: take a $[0,1]_G$-model $\mathbf{M}$ with $M=(0,1]$ and such that $r_P(m)=m$. Hence, from \Cref{cor:blchacont}, it follows that BL$_\text{Chang}\forall\not\vdash (\forall x)\neg\neg P(x)\to \neg\neg (\forall x) P(x)$.
\item $\mathcal{A}_0$ is infinite and not dense. As shown in \cite[Theorem 17]{monchain} the formula $(\forall x)\neg\neg P(x)\to \neg\neg (\forall x) P(x)\vee \neg (\forall x) P(x)\to ((\forall x) P(x))^2$ is a tautology in every BL-chain whose first component is infinite and not densely ordered: hence we have that $\mathcal{A}\models (\forall x)\neg\neg P(x)\to \neg\neg (\forall x) P(x)\vee \neg (\forall x) P(x)\to ((\forall x) P(x))^2$. Also in this case, however, this formula fails in $[0,1]_G$, using the same model $\mathbf{M}$ of the previous case.
\end{itemize}
\end{proof}
\bibliography{blchang}
\bibliographystyle{amsalpha}
\end{document}